\documentclass[10pt,reqno]{amsart}
\usepackage{hyperref}
\usepackage{graphicx}
\usepackage{color}
\usepackage[all]{xy}
\usepackage{amsfonts,amssymb}
\usepackage{amsthm}
\usepackage{bbm} 

\usepackage{mathrsfs}

\usepackage{pdfsync}

\usepackage{a4wide}





\newtheorem{neu}{}[section]

\newtheorem*{Cor*}{Corollary}
\newtheorem{Thm}[neu]{Theorem}
\newtheorem*{Thm*}{Theorem}

\newtheorem{Prop}[neu]{Proposition}
\newtheorem*{Prop*}{Proposition}
\theoremstyle{definition}
\newtheorem{Lemma}[neu]{Lemma}

\newtheorem*{Rmk*}{Remark}
\newtheorem{Rmk}[neu]{Remark}
\newtheorem{Ex}[neu]{Example}
\newtheorem*{Ex*}{Example}
\newtheorem{Qu}[neu]{Question}
\newtheorem*{Qu*}{Question}

\newtheorem{Def}[neu]{Definition}

\newcommand{\N}{\mathbb{N}}

\newcommand{\Z}{\mathbb{Z}}
\newcommand{\R}{\mathbb{R}}

\newcommand{\C}{\mathbb{C}}


\newcommand{\id}{\mathrm{id}}

\newcommand{\om}{\omega}
\newcommand{\Om}{\Omega}


\newcommand{\acs}{almost complex structure}

\renewcommand{\S}{\mathbb{S}}

\newcommand{\D}{\mathbb{D}}

\newcommand{\B}{\mathcal{B}}

\newcommand{\beq}{\begin{equation}}
\newcommand{\beqn}{\begin{equation}\nonumber}
\newcommand{\eeq}{\end{equation}}

\newcommand{\bea}{\begin{equation}\begin{aligned}}
\newcommand{\bean}{\begin{equation}\begin{aligned}\nonumber}
\newcommand{\eea}{\end{aligned}\end{equation}}

\numberwithin{equation}{section}

\definecolor{Urs}{rgb}{0,.7,0}
\definecolor{Youngjin}{rgb}{0,0,1}
\definecolor{red}{rgb}{1,0,0}

\newcommand{\p}{\partial}


\newcommand{\one}{{1\hskip-2.5pt{\rm l}}}

\begin{document}
\title[An overtwisted disk in a virtual contact structure and the Weinstein conjecture]
{An overtwisted disk in a virtual contact structure \\
and the Weinstein conjecture
}
\author{Youngjin Bae}
\address{
    Youngjin Bae\\
    Center for Geometry and Physic, Institute for Basic Science
	and Pohang University of Science and Technology(POSTECH)\\
	77 Cheongam-ro, Nam-gu, Pohang-si, Gyeongsangbuk-do, Korea 790-784}
\email{yjbae@ibs.re.kr}
\keywords{virtual contact structure, overtwisted disk, periodic orbit}
\begin{abstract}
Hofer proved in \cite{Hof} the Weinstein conjecture for a closed contact 3-manifold with an overtwisted disk.
In this article we extend it to the virtual contact structure and provide a new explicit example of the virtual contact structure
with an overtwisted disk via a Lutz twist.
\end{abstract}
\maketitle

\section{Introduction}
A virtual contact structure which naturally appear in the study of magnetic flows
 is a generalization of a contact structure.
Dynamical properties of a virtual contact structure including 
stability, displaceability, periodic orbits, and leaf-wise intersections 
are studied in the literatures \cite{Pat, Me1, CFP, Me2, BF, Ba1}, 
when the virtual contact structure arises as an energy hypersurface of a twisted cotangent bundle.
In this article we focus on the topological aspect of a virtual contact structure.

\begin{Def}\label{def:HS}
A {\em Hamiltonian structure} on an oriented $(2n+1)$-dimensional manifold $M$
is  a closed 2-form $\om$ of maximal rank, i.e. $\om^n$ vanishes nowhere. So
\bean
\ker\om_x:=\{v\in T_xM\,|\,\iota_v\om_x=0\}
\eea
gives an 1-dimensional foliation on $M$.
By using the orientation on $M$, we orient $\ker\om$
and choose a non-vanishing  vector field $X^\om$ on $M$ 
such that $\R X^\om=\ker\om$. 
\end{Def}


\begin{Def}\label{def:vcs}
A Hamiltonian structure $(M,\om)$ is called
{\em virtual contact}
if there exists a covering $p:\widehat M\to M$ 
and a primitive $\lambda\in\Om^1(\widehat M)$ of $p^*\om$
satisfying the following conditions:
\bea\label{eqn:vircond}
\|\lambda\|_{C^0}\leq C<\infty,\qquad
\inf_{x\in\widehat M}\lambda(x)(\widehat X(x))\geq\epsilon>0
\eea
for some $C,\epsilon\in\R$.
Here $\widehat X$ is the lift of $X$ and 
$\|\cdot\|_{C^0}$-norm is given by the lifted metric $\widehat m:=p^*m$ on $\widehat M$
where $m$ is a Riemannian metric on $M$.

A virtual contact structure $(p:\widehat M\to M,\om,\lambda)$ on a smooth manifold $M$ is called {\em smooth}
if all higher covariant derivatives $\nabla^l_{\bf Y}\lambda$ are exist and uniformly bounded. 
Here ${\bf Y}$ is $l$-pairs of $G$-invariant smooth vector fields and $\nabla$ is a $G$-invariant connection on $\widehat M$,
where $G$ is the deck-transformation group of the covering $p:\widehat M\to M$.
\end{Def}

One of the main question in contact geometry is (the intrinsic version of) the Weinstein conjecture \cite{Wei}
which says that {\em every closed contact manifold $(Q,\zeta)$ has a closed orbit for any Reeb vector field}.
Indeed, we need a contact 1-form $\alpha$ for $\zeta$, i.e. $\ker\alpha=\zeta$, to define the {\em Reeb vector field} $X^\alpha$ 
which satisfies
\bean
d\alpha(X^\alpha,{_-})=0,\quad \alpha(X^\alpha)=1.
\eea
We also have a Hamiltonian structure $(Q,d\alpha)$ 
and the vector field $X^{d\alpha}$ in Definition \ref{def:HS} turns out to be a rescaling of $X^{\alpha}$. 

The Weinstein conjecture was first proved by Hofer in \cite{Hof} for a closed contact 3-manifold $M$ 
with an overtwisted disk or with $\pi_2(M)\neq0$ or $M=\S^3$.
Later, Taubes \cite{Tau} proved the conjecture for any closed contact 3-manifold, see also \cite{Hut}.
There are several extensions of the conjecture including the {\em strong Weinstein conjecture} in \cite{ACH, GZ}
and the Weinstein conjecture for the {\em stable Hamiltonian structure} in \cite{HT}.
The following question is raised by G. Paternain as another generalization of the Weinstein conjecture in the virtual contact structure.

\begin{Qu}\label{qu:vwc}
Let $(p:\widehat M\to M,\om,\lambda)$ be a virtual contact structure on a closed manifold $M$.
Does $X^\om$ admit a periodic orbit?
\end{Qu}

There is a fundamental dichotomy of contact topology on 3-manifolds, 
{\em tight} and {\em overtwisted}.
In order to state the result we need to extend these concepts to 3-dimensional virtual contact structures.
\begin{Def}\label{def:ot}
An embedded disk $F$ in a contact 3-manifold $(Q,\zeta)$ is an {\em overtwisted disk} if
$T\p F\subset\zeta|_{\p F}$ and $TF\cap\zeta|_F$ defines a smooth 1-dimensional characteristic foliation on $F$
except a unique elliptic singular point $e\in{\rm int}F$ with $T_eF=\zeta_e$.
A virtual contact structure $(p:\widehat M\to M,\om,\lambda)$ is called {\em overtwisted} if 
$(\widehat M,\ker\lambda)$ contains an overtwisted disk.
\end{Def}

Let us briefly mention Hofer's argument of deriving a contractible periodic orbit from an overtwisted disk.
By the filling method he constructed a family of pseudoholomorphic disks, a {\em Bishop family}, 
in the symplectization of a closed contact 3-manifold.
The overtwisted disk guarantees a gradient exploding sequence in the Bishop family 
and by the rescaling argument we obtain a finite energy plane.
The failure of the finite energy plane to be a sphere produce a periodic orbit  as we desired.
By extending the above argument we obtain the following result:\\[-2mm]

\noindent{\bf Main Theorem.}
{\em
Let $(p:\widehat M\to M,\om,\lambda)$ be a virtual contact structure on a closed 3-manifold $M$.
If $(p:\widehat M\to M,\om,\lambda)$ is smooth and overtwisted then $X^\om$ has a contractible periodic orbit.
}\\[-2mm]


Hofer's argument was initiated from the Eliashberg's filling technique in \cite{Eli} 
which induces an obstruction to symplectic fillings. 
A similar question about symplectic filling is possible in the virtual contact structure.
We can also expect a higher dimensional generalization of the main theorem as achieved in \cite{AlH, NR}.\\

{\em Acknowledgement} : I sincerely thank Otto van Koert for valuable and continued discussions. 
I am also grateful to Urs Frauenfelder, Peter Albers, Yong-geun Oh, and Rui Wang for their helpful comments, 
and to the organizers of East Asian Symplectic Conference in the Kagoshima University 
for giving me a chance to give a talk. 
This work was started during a stay at the University of M\"unster(SFB 878 - Groups, Geometry and Actions) 
and the paper is written in the Institute for Basic Science(the Research Center Program of IBS in Korea, CA1305) in POSTECH. 
I thank them both for their stimulating working atmosphere and generous support.

\section{An almost complex structures 
on a virtual contact structure}
Let $(p:\widehat M\to M, \om, \lambda)$ be a virtual contact structure on a $(2n+1)$-dimensional Riemannian manifold $(M,m)$,
then $\lambda\in\Om^1(\widehat M)$ is a contact form on $\widehat M$, i.e. $\lambda\wedge d\lambda^n>0$. 
Let us denote $X^\lambda$ by the Reeb vector field for the contact 1-form $\lambda$
\bean
d\lambda(X^\lambda,{_-})=0,\quad \lambda(X^\lambda)=1.
\eea
By the virtual contact condition ($\ref{eqn:vircond}$) there are positive constants $\epsilon',\,C'$ such that
$\epsilon'\leq|X^\lambda|_{\widehat m}\leq C'$.
Note that $X^\lambda$ is a rescaling of the vector field $\widehat X$ in Definition \ref{def:vcs}.
Associated to the virtual contact structure, we have the 
following canonical decomposition:
\bea\label{eqn:condec}
T\widehat M=(\ker\widehat\om,X^\lambda)\oplus(\xi:=\ker\lambda,\widehat\om).
\eea
Here $\ker\widehat\om$ is a line bundle with the section $X^\lambda$
and $(\ker\lambda,\widehat\om)$ is a symplectic bundle on $\widehat M$.

Now we choose an almost complex structure $J:\xi\to\xi$
such that
\bea\label{eqn:acs1}
\widehat m_x(\pi_\lambda k_1,\pi_\lambda k_2)
=d\lambda_x(\pi_\lambda k_1,J(x)\pi_\lambda k_2),
\eea
where $k_1,k_2\in T_x\widehat M$, $\pi_\lambda:T\widehat M\to\xi$ be the fibrewise
projection map along the Reeb direction $X^\lambda$.
Let $\widetilde J$ be the associated almost complex structure on $\R\times\widehat M$ defined by
\bea\label{eqn:acs2}
\widetilde J(a,u)(h,k):=(-\lambda(u) k,J(u) \pi_\lambda k+h\cdot X^\lambda(u)).
\eea
Now define a corresponding Riemannian metric on $\R\times\widehat M$ by
\bea\label{eqn:met}
m_{\lambda}((h_1,k_1),(h_2,k_2)):=h_1h_2+\lambda(k_1)\lambda(k_2)+\widehat m(\pi_\lambda k_1,\pi_\lambda k_2).
\eea

\begin{Rmk}\label{rmk:notjinv}
Suppose that the deck transformation group $G$ is finite, then by averaging $\lambda$ we obtain a 1-form
\bean
\overline\lambda=\frac{1}{|G|}\sum_{g\in G}g^*\lambda
\eea
 on $\widehat M$. Since $\overline\lambda$ is $G$-invariant, it descends to  a 1-form $\underline\lambda\in\Om^1(M)$
such that $\overline\lambda=p^*\underline\lambda$ and $d\underline\lambda=\om$. 
So $\underline\lambda$ becomes a contact 1-form on $M$ and hence 
the dynamics of a contact manifold $(M,\underline\lambda)$
determines the dynamics of the virtual contact structure $(p:\widehat M\to M,\om,\lambda)$.

From now on we mainly consider the case that $|G|$ is infinite\footnote{More precisely, we consider a {\em non-amenable group}
which means that there is {\em no} averaging operation on bounded functions, see \cite{CFP} for the details.}, 
i.e. $\widehat M$ is noncompact, and $\lambda$ is not $G$-invariant.
In such a case, $G$ does not preserve $X^\lambda$, $\xi$ and
hence $J$, $\widetilde J$, $m_\lambda$ are not preserved by the $G$-action,
while $\widehat\om$ and $\R X^\lambda=\ker\widehat\om$ are $G$-invariant.

By virtue of the relation (\ref{eqn:acs1}),
$J$ and $\widetilde J$ behave well under the $G$-action even though they are not $G$-invariant,
see (\ref{eqn:m_k}) in the proof of Lemma \ref{lem:Cinfty}.
If we require only the almost complex structure $J$ on $\xi$ to be $d\lambda$-compatible
instead of (\ref{eqn:acs1}), then we cannot control the limit behavior of $J$ and $\widetilde J$
on the unbounded region of $\widehat M$.

\end{Rmk}

\section{A Bishop family from an overtwisted disk}
In this section we introduce a Bishop family in the virtual contact structure 
and recall its known properties from \cite{Hof, AH, HWZ}.
In this section we provide a gradient exploding sequence of pseudoholomorphic disks
from an overtwisted disk.
All constructions, theorems, and lemmas in this section are direct consequences of the ones in the above references, 
so we omit the proof here.

Let $(p:\widehat M\to M,\om,\lambda)$ be a virtual contact structure on a smooth 3-manifold $M$
and $F\subset \widehat M$ be an overtwisted disk with the elliptic singular point $e\in{\rm int}F$.
We consider a family of pseudoholomorphic disks
$\widetilde u_\tau=(a_\tau,u_\tau):\D=\{z\in\C\,|\,|z|\leq1\}\to\R\times\widehat M,\ \tau\in[0,\tau_0)$
\bea\label{eqn:Jeqn}
\p_s\widetilde u_\tau+\widetilde J(\widetilde u_\tau)\p_t\widetilde u_\tau=0
\eea
satisfying the following conditions:
\begin{enumerate}
\item $\widetilde u_0\equiv (0,e)$;
\item $\widetilde u_\tau(\p\D)\subset\{0\}\times({\rm int}F\setminus \{e\})$ for all $\tau\in(0,\tau_0)$;
\item $\bigcup_{0\leq\tau<\tau_0}u_\tau(\p\D)$ is an open neighborhood of $e$ in $F$;
\item $\widetilde u_\tau(\D)\cap\widetilde u_\rho(\D)=\emptyset$ if $\tau\neq\rho$;
\item $\widetilde u_\tau$ is an embedding for $\tau\in(0,\tau_0)$ and $u_\tau(\p\D)$ winds once around $e$.
\end{enumerate}

Such a family of pseudo-holomorphic disks $\B$
exists and we call $\B$ a {\em Bishop family}.
We state the implicit function theorem near an embedded pseudoholomorphic disk as follows:

\begin{Thm}[\cite{Hof}]\label{thm:implicit}
Let $(p:\widehat M \to M,\om,\lambda)$ be a 3-dimensional virtual contact structure 
with an overtwisted disk $F\subset \widehat M$.
Moreover, let $\widetilde u_0$ be a pseudohomolophic disk in $(\R\times\widehat M,\widetilde J)$ 
satisfying the condition (2), (5) of the Bishop family.
Then there exists a smooth embedding $U:(-\epsilon,\epsilon)\times\D\to \R\times \widehat M$ such that 
with $\widetilde u(\tau)(z):=U(\tau,z)$ we have
\bean
\widetilde u(\tau)(z)&\in F\text{ for all }z\in\p\D;\\
\bar\p_J \widetilde u(\tau)&=0\text{ for all }\tau\in(-\epsilon,\epsilon);\\
\widetilde u(0)&=\widetilde u_0.
\eea
Moreover, the associated disk family $\tau\mapsto \widetilde u(\tau)(\D)$ is unique up to parametrization of $\D$.\footnote{
In the original statement, $(\R\times\widehat M,\{0\}\times F)$ can be generalized to an almost complex manifold with a totally real submanifold. Here the condition (2) of the Bishop family guarantees the totally real boundary condition. Then the condition (5) should be replaced by the {\em Maslov index} condition, $\mu(\widetilde u_0)=2$.}
\end{Thm}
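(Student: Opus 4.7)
The plan is to carry out the standard Banach manifold setup for pseudoholomorphic disks with totally real boundary and to verify that the hypotheses of the implicit function theorem are satisfied at $\widetilde u_0$.

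First I would check that $\{0\}\times F$ is a totally real submanifold of $(\R\times\widehat M,\widetilde J)$ along $\widetilde u_0(\partial\D)$. By Definition \ref{def:ot}, at every $x\in F\setminus\{e\}$ the characteristic line $T_xF\cap\xi_x$ is one-dimensional, so $T_xF$ decomposes into that line and a transverse line with nonzero Reeb component. Using formula (\ref{eqn:acs2})---in particular that $\widetilde J$ interchanges $X^\lambda$ with $\partial_s$ and preserves $\xi$ via $J$---one checks directly that $T(\{0\}\times F)\oplus\widetilde J\,T(\{0\}\times F)=T(\R\times\widehat M)$ along $\{0\}\times(F\setminus\{e\})$. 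Condition (2) of the Bishop family guarantees that $\widetilde u_0(\partial\D)$ avoids $e$, so the totally real condition holds uniformly along the boundary.

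Next I set up the analysis. Fix $p>2$ and consider the Banach manifold of $W^{1,p}$-maps $\widetilde u:\D\to\R\times\widehat M$ with $\widetilde u(\partial\D)\subset\{0\}\times F$, equipped with the Banach bundle whose fiber at $\widetilde u$ is $L^p(\D,\Omega^{0,1}\otimes_{\widetilde J}\widetilde u^*T(\R\times\widehat M))$. The Cauchy--Riemann section $\bar\partial_{\widetilde J}$ has linearization $D_{\widetilde u_0}$ at $\widetilde u_0$, a real-linear Cauchy--Riemann operator with totally real boundary condition; by Riemann--Roch for disks its Fredholm index equals $n+\mu(\widetilde u_0)=2+2=4$. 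Quotienting by the three-dimensional reparametrization group $\mathrm{Aut}(\D)\cong\mathrm{PSU}(1,1)$ yields the expected unparametrized moduli dimension $1$, matching the single parameter $\tau$.

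The main obstacle is surjectivity of $D_{\widetilde u_0}$. Because the ambient dimension is $4$, the disk $\widetilde u_0$ is embedded by condition (5), and $\mu(\widetilde u_0)=2$, I would invoke the automatic transversality argument used by Hofer in \cite{Hof}: the normal Cauchy--Riemann operator along an embedded disk of Maslov index $2$ in a four-dimensional almost complex manifold is automatically surjective. Once surjectivity is in hand, the implicit function theorem applied after fixing a local slice for the $\mathrm{Aut}(\D)$-action produces a smooth embedding $U:(-\epsilon,\epsilon)\times\D\to\R\times\widehat M$ whose $\tau$-slices solve $\bar\partial_{\widetilde J}=0$, satisfy the boundary condition in $\{0\}\times F$, and equal $\widetilde u_0$ at $\tau=0$. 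Elliptic regularity upgrades the $W^{1,p}$-solutions to smooth ones, and uniqueness of the geometric family $\tau\mapsto\widetilde u(\tau)(\D)$ up to reparametrization is precisely the uniqueness part of the implicit function theorem on the slice.
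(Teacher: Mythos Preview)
The paper does not give its own proof of this theorem: at the start of Section~3 it explicitly says that all results there are direct consequences of \cite{Hof,AH,HWZ} and omits the proofs, and Theorem~\ref{thm:implicit} is simply quoted from \cite{Hof}. Your outline is precisely the standard argument behind that cited result---totally real boundary via (\ref{eqn:acs2}), the Riemann--Roch index count $n+\mu=2+2=4$, automatic transversality for embedded Maslov-$2$ disks in real dimension four, and the implicit function theorem on a slice for $\mathrm{Aut}(\D)$---so there is nothing to correct or contrast.
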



If $\B=(\widetilde u_\tau)_{\tau\in[0,\tau_0)}$ is a Bishop family and 
$(\phi_\tau:\D\to \D)_{\tau\in[0,\tau_0)}$ is a $\tau$-parametrized family of conformal maps
then $(\widetilde u_\tau\circ\phi_\tau)_{\tau\in[0,\tau_0)}$ is also a Bishop family.
In order to fix a parametrization\footnote{Since the group of biholomorphic maps on $\D$ is 3-dimensional, 
it suffices to fix 3 boundary points.} of $\widetilde u_\tau(\D)$,
let us first parametrize the leaves of the characteristic foliation, in Definition \ref{def:ot}, approaching the singular point $e$
by $(l_\alpha)_{\alpha\in S^1}$ and require
\bea\label{eqn:normal}
u_\tau(1)\in l_1,\quad u_\tau(i)\in l_i,\quad u_\tau(-1)\in l_{-1}.
\eea
Moreover, this normalization condition (\ref{eqn:normal}) prohibits 
the existence of a gradient explosion sequence on the boundary as follows:

\begin{Thm}[\cite{HWZ}]\label{thm:bdsing}
Let $(p:\widehat M\to M,\om,\lambda)$ be a 3-dimensional virtual contact structure with an overtwisted disk $F\subset \widehat M$.
Let $\B=(\widetilde u_\tau)_{\tau\in[0,\tau_0)}$ be a Bishop family 
with the normalization condition (\ref{eqn:normal}) as in the above.
Then there exists $\epsilon>0$ such that on the annulus $A_\epsilon=\{z\in\D\,|\,1-\epsilon\leq|z|\leq 1\}$
we have
\bean
\sup_{0\leq\tau<\tau_0}\sup_{z\in A_\epsilon}|\nabla\widetilde u_\tau(z)|<\infty.
\eea
\end{Thm}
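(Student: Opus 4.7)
My plan is to argue by contradiction using an HWZ-style bubbling analysis, with the three-point normalization (\ref{eqn:normal}) supplying the topological input that rules out a boundary bubble. Suppose no such $\epsilon$ works: then there exist sequences $\tau_n \in [0,\tau_0)$ and $z_n \in \D$ with $|z_n| \to 1$ and $R_n := |\nabla \widetilde u_{\tau_n}(z_n)| \to \infty$. I will first apply Hofer's $\epsilon$-lemma on $\Db$ to arrange that the gradient is controlled by $2R_n$ on a disk of radius $r_n$ around $z_n$ with $R_n r_n \to \infty$, then pass to a subsequence with $z_n \to z_\infty \in \p\D$. In a local conformal chart identifying a $\Db$-neighborhood of $z_\infty$ with a neighborhood of $0$ in the closed upper half-plane $\overline{\HH}$ (so that $\p\D$ is sent to the real axis), I form the rescaled maps
\[
\widetilde v_n(\zeta) := \widetilde u_{\tau_n}\!\left(z_n + \zeta / R_n\right) - (a_n, 0),
\]
where $a_n \in \R$ is chosen so that $\widetilde v_n(0) \in \{0\} \times \widehat M$; this translation is legitimate by the $\R$-invariance of $\widetilde J$.

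Next I extract a bubble limit. By construction $|\nabla \widetilde v_n| \le 2$, and the domains of the $\widetilde v_n$ exhaust either $\C$ or $\overline{\HH}$ according as $R_n\cdot\mathrm{dist}(z_n,\p\D)\to\infty$ or stays bounded. A uniform energy bound follows from $\int_\D \widetilde u_{\tau_n}^* d(e^a\lambda) = \int_{\p\D} u_{\tau_n}^* \lambda$ via Stokes, combined with the fact that $u_{\tau_n}(\p\D)$ lies in a fixed compact subset of $F$. Elliptic regularity for $\widetilde J$-holomorphic maps and Arzel\`a--Ascoli then produce a $C^\infty_{\mathrm{loc}}$ subsequential limit $\widetilde v$, non-constant since $|\nabla \widetilde v(0)|=1$: either a finite energy plane $\widetilde v:\C\to\R\times\widehat M$, or a finite energy half-plane $\widetilde v:\overline{\HH}\to\R\times\widehat M$ with $\widetilde v(\p\overline{\HH})\subset\{0\}\times F$; by removable singularities at $\infty$ the latter extends to a $\widetilde J$-holomorphic disk with totally real boundary on $\{0\}\times F$.

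Finally I invoke (\ref{eqn:normal}) to rule out both bubbles. For the plane bubble: since $z_\infty\in\p\D$ and $R_n\to\infty$, the bubble sits inside the collar $\{1-\delta_n\le|z|\le 1\}$ with $\delta_n\to 0$, where by Stokes the symplectic area of $\widetilde u_{\tau_n}$ restricted to this collar tends to zero, contradicting the positive action of a finite energy plane (equal to the period of a closed Reeb orbit of $X^\lambda$, bounded below on compact subsets of $\widehat M$). For the disk bubble: $\widetilde v(\p\Db)\subset F$ is a limit of rescaled arcs of $u_{\tau_n}(\p\D)$ of length $\sim R_n^{-1}$ in $F$, so the resulting loop cannot wind around the elliptic singular point $e$; but a non-constant $\widetilde J$-holomorphic disk of the type forced by Theorem \ref{thm:implicit} (Maslov index two with totally real boundary on $F$) must wind once around $e$, a contradiction. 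The main technical obstacle I expect is the compactness step in the \emph{virtual} setting, where $\lambda$, $\widetilde J$, and $m_\lambda$ fail to be $G$-invariant on $\widehat M$; the smoothness hypothesis of Definition \ref{def:vcs}, combined with (\ref{eqn:vircond}) and Remark \ref{rmk:notjinv}, should furnish the uniform $C^\infty$-bounds needed for the classical HWZ compactness arguments to go through in the non-compact cover.
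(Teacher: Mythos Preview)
The paper does not actually prove this theorem: Section~3 opens by declaring that all results there are direct consequences of \cite{Hof,AH,HWZ} and the proofs are omitted. The only argument the paper supplies is the Remark immediately following the statement, which observes that if a gradient-exploding sequence $(\widetilde u_k,z_k)$ has $z_k\to\partial\D$, then $u_k(z_k)$ must converge to a point of the compact disk $F$, so the non-compactness of $\widehat M$ creates no new difficulty and the classical HWZ proof applies verbatim. Your proposal therefore goes well beyond the paper: you sketch the actual HWZ bubbling argument, and you correctly isolate the same non-compactness issue and the same resolution (images stay near $F$).

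Your half-plane/disk-bubble step is the core of the HWZ argument and is correctly outlined: the rescaled boundary arc has vanishing length, so the limiting disk has boundary winding zero about $e$, contradicting the index/winding constraint coming from condition~(5) of the Bishop family. This is exactly where Lemma~\ref{lem:transv} enters, as the paper's footnote indicates.

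Your plane-bubble step, however, has a genuine gap. You assert that ``by Stokes the symplectic area of $\widetilde u_{\tau_n}$ restricted to the collar $\{1-\delta_n\le|z|\le1\}$ tends to zero,'' but Stokes only gives
\[
\int_{A_{\delta_n}} u_{\tau_n}^*d\lambda \;=\; \int_{\partial\D}u_{\tau_n}^*\lambda \;-\; \int_{|z|=1-\delta_n}u_{\tau_n}^*\lambda,
\]
and while the outer term is bounded (boundary on $F$), you have no uniform control of the inner term as $\tau_n$ varies: the whole issue is precisely that $d\lambda$-area might concentrate near $\partial\D$. So the claimed contradiction does not follow. The HWZ treatment of this case is more delicate: one first uses Lemma~\ref{lem:transv} and the normalization~(\ref{eqn:normal}) to get \emph{uniform} control of the boundary loops $u_{\tau}|_{\partial\D}$ (each hits every leaf $l_\alpha$ exactly once, with three leaves pinned), and then a boundary-regularity/reflection argument for the totally real boundary condition promotes this to a gradient bound on a fixed collar. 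Alternatively one can argue that, after Hofer's lemma, one may take the bubbling point on $\partial\D$ itself, forcing the half-plane case; but this too requires care. Either way, your Stokes sentence does not do the job.
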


\begin{Rmk}
Even though Theorem \ref{thm:bdsing} is proved only for closed contact 3-manifolds, it is still valid for the non-compact case. Suppose that there is a gradient exploding sequence $(\widetilde u_k,z_k)$ such that $z_k$ converges to $\p\D$. 
Then $u_k(z_k)$ should converges to a point in $F$ 
and so it cannot escape to the unbounded region of $\widehat M$. 
The non-compactness of $\widehat M$ causes no additional difficulties in the proof of Theorem \ref{thm:bdsing}.
\end{Rmk}

The following observation is crucial when we produce a gradient explosion sequence 
from a given normalized Bishop family.\footnote
{Lemma \ref{lem:transv} is also essential in the proof of Theorem \ref{thm:bdsing}.
By this lemma $u_\tau|_{\p\D}$ hits each leaf of the parametrized leaves $(l_\alpha)_{\alpha\in S^1}$ exactly once.}

\begin{Lemma}[\cite{Hof}]\label{lem:transv}
Let $(p:\widehat M\to M,\om,\lambda)$ be a 3-dimensional virtual contact structure 
with an overtwisted disk $F\subset\widehat M$.
Let $\B$ be a Bishop family and take an embedded disk $\widetilde u_\tau=(a_\tau,u_\tau)\in\B$.
Then $u_\tau|_{\p\D}:\p\D\to F$ is transversal to the foliation $TF\cap\xi|_F$ on $F$. 
\end{Lemma}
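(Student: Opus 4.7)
\emph{Proof plan.}
The plan is to apply the Hopf boundary point lemma to the $\R$-coordinate $a_\tau$ of $\widetilde u_\tau=(a_\tau,u_\tau)$. Concretely, I will show that $a_\tau$ is subharmonic on $\D$ with vanishing boundary values and strictly negative interior, so that the outward normal derivative of $a_\tau$ on $\p\D$ is strictly positive; a short calculation then identifies this normal derivative with $\lambda$ evaluated on the tangent vector of $u_\tau|_{\p\D}$, which gives the required transversality.

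First, unpacking the equation $\p_s\widetilde u_\tau+\widetilde J(\widetilde u_\tau)\p_t\widetilde u_\tau=0$ via the formula (\ref{eqn:acs2}) yields the identities $\p_s a_\tau=\lambda(\p_t u_\tau)$, $\p_t a_\tau=-\lambda(\p_s u_\tau)$, together with the $\widehat M$-component equation $\p_s u_\tau=-J\pi_\lambda\p_t u_\tau-(\p_t a_\tau)X^\lambda$. Differentiating the first pair gives $\Delta a_\tau=d\lambda(\p_s u_\tau,\p_t u_\tau)$; substituting the $\widehat M$-equation and using the compatibility (\ref{eqn:acs1}) between $J$ and $d\lambda$, this reduces to $\Delta a_\tau=|\pi_\lambda\p_t u_\tau|^2_{\widehat m}\geq 0$, so $a_\tau$ is subharmonic on $\D$.

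Because $\widetilde u_\tau(\p\D)\subset\{0\}\times F$, we have $a_\tau|_{\p\D}\equiv 0$, and the maximum principle gives $a_\tau\leq 0$ on $\D$. If $a_\tau\equiv 0$ on $\D$, then the two identities above force $\p_s u_\tau\equiv\p_t u_\tau\equiv 0$, contradicting the embeddedness of $\widetilde u_\tau$ for $\tau\in(0,\tau_0)$. Hence $a_\tau<0$ strictly on $\mathrm{int}\,\D$, and the Hopf boundary point lemma yields $\p_\nu a_\tau(z_0)>0$ at every $z_0\in\p\D$, where $\nu$ is the outward unit normal. At such a $z_0$, writing $\p_T$ for the positively oriented unit tangent to $\p\D$ and using $\p_T a_\tau\equiv 0$ on $\p\D$, the first-step identities specialize to $\p_\nu a_\tau(z_0)=\lambda(\p_T u_\tau(z_0))$. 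Therefore $\lambda(\p_T u_\tau(z_0))>0$, so $\p_T u_\tau(z_0)\notin\xi$; since $\p_T u_\tau(z_0)\in TF$ automatically, this is exactly the claimed transversality of $u_\tau|_{\p\D}$ to $TF\cap\xi|_F$.

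The only delicate point is sign bookkeeping: verifying that the conventions of (\ref{eqn:acs1})--(\ref{eqn:acs2}) indeed produce $\Delta a_\tau\geq 0$ (rather than $\leq 0$), so that the maximum-principle step correctly gives $a_\tau\leq 0$ in the interior and Hopf supplies the correct sign of $\p_\nu a_\tau$. Once this is fixed, the argument is a direct transcription of Hofer's classical proof to the cover $\widehat M$; the non-compactness of $\widehat M$ plays no role since $\widetilde u_\tau(\D)$ is contained in a compact region of $\R\times\widehat M$.
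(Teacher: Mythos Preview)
Your argument is correct and is precisely Hofer's classical proof via the subharmonicity of $a_\tau$ and the Hopf boundary point lemma; the paper itself omits the proof and simply cites \cite{Hof}, so your proposal supplies exactly the content behind that citation. The only minor imprecision is the phrase ``the two identities above force $\p_s u_\tau\equiv\p_t u_\tau\equiv 0$'': those two identities alone only kill the Reeb components, and you also need $\Delta a_\tau=|\pi_\lambda\p_t u_\tau|^2_{\widehat m}=0$ together with $\pi_\lambda\p_s u_\tau=-J\pi_\lambda\p_t u_\tau$ to kill the $\xi$-components---but this is already contained in your setup and does not affect the validity of the argument.
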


By the definition of the overtwisted disk $F$, Definition \ref{def:ot}, the boundary $T\p F$ is
contained in the foliation $TF\cap\xi|_F$ on $F$.
So Lemma \ref{lem:transv} informs us that $u_\tau(\p\D)$ cannot meet $\p F$.
Since our Bishop family $\B=(\widetilde u_\tau)_{\tau\in[0,\tau_0)}$ emanated from
the singular point $e\in{\rm int}F$,  $u_\tau(\p\D)$ never touch $\p F$ nor escape it.
In other words, $e\in {\rm int}F$ enables us to create $\B$, while $\p F$ gives us an obstruction to extend $\B$.

We may assume that $\B=(\widetilde u_\tau)_{\tau\in[0,\tau_0)}$ is 
a maximal Bishop family with the normalization condition without loss of generality.
Suppose that $\|\nabla\widetilde u_\tau\|_{C^0(\D)}$ is $\tau$-uniformly bounded, 
then by the elliptic estimate of a pseudoholomorphic disk we have a $\tau$-uniform $C^\infty$-bound.
The uniform gradient bound guarantees that the image $\widetilde u_\tau(\D)$ 
is also uniformly bounded in $\R\times\widehat M$.
We then conclude by the Arzel\`a-Ascoli theorem for every sequence $\tau_k\to\tau_0$ 
there is a $C^\infty$-convergent subsequence of $\widetilde u_{\tau_k}$.
By the implicit function theorem, Theorem \ref{thm:implicit}, 
we extend our Bishop family further.
But this contradicts the maximality of our initial Bishop family $\B$.
This proves the following result.
\begin{Thm}[\cite{AH}]\label{thm:gradexp}
Let $(p:\widehat M\to M,\om,\lambda)$ be a 3-dimensional virtual contact structure 
with an overtwisted disk $F\subset\widehat M$.
Let $\B=(\widetilde u_\tau)_{0\leq\tau<\tau_0}$ be a normalized maximal Bishop family on $\R\times\widehat M$ 
which emerges from $F$.
Then we have
\bean
\sup_{\tau\in[0,\tau_0)}\|\nabla\widetilde u_\tau\|_{C^0(\D)}=\infty.
\eea
\end{Thm}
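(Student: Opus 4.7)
The plan is to argue by contradiction, assuming $\sup_{\tau \in [0,\tau_0)} \|\nabla \widetilde u_\tau\|_{C^0(\D)} < \infty$, and then extend the Bishop family past $\tau_0$, violating maximality. The structure of the argument is already hinted at in the discussion preceding the theorem, but let me flesh out what I would do.

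First, I would promote the assumed $C^0$ gradient bound to a $C^\infty_{\mathrm{loc}}$ bound on $\widetilde u_\tau$, uniformly in $\tau$. In the interior this is the standard elliptic bootstrap for the $\widetilde J$-holomorphic equation \eqref{eqn:Jeqn}; near the boundary one uses the totally real boundary condition $\widetilde u_\tau(\p\D) \subset \{0\} \times F$ (condition (2) of the Bishop family) together with Theorem \ref{thm:bdsing}, which already provides the boundary gradient bound from the normalization \eqref{eqn:normal}. Combined, a $\tau$-uniform $C^0$ bound on $\nabla \widetilde u_\tau$ yields $\tau$-uniform bounds on all higher derivatives.

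Next I would check that the images $\widetilde u_\tau(\D)$ remain in a fixed compact subset of $\R \times \widehat M$. The boundaries $u_\tau(\p\D)$ sit inside the compact disk $F$ by construction, and Lemma \ref{lem:transv} (together with the fact that the Bishop family emanates from the interior singular point $e$) forbids them from escaping through $\p F$. Since $\D$ has bounded diameter and $|\nabla \widetilde u_\tau|$ is uniformly bounded, the entire image $\widetilde u_\tau(\D)$ lies within a uniformly bounded metric neighbourhood of $\{0\} \times F$ in $(\R \times \widehat M, m_\lambda)$. This is where one must be a bit careful because $\widehat M$ is noncompact, but the diameter estimate reduces the question to a neighbourhood of the compact set $F$.

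With uniform $C^\infty$ bounds on a precompact image, the Arzel\`a--Ascoli theorem gives, for any sequence $\tau_k \nearrow \tau_0$, a subsequence converging in $C^\infty(\D, \R \times \widehat M)$ to a $\widetilde J$-holomorphic disk $\widetilde u_{\tau_0}$ with boundary in $\{0\} \times F$. The embedded and winding properties (condition (5)) are closed conditions under $C^1$ convergence (embeddedness may degenerate only to a multiple cover or bubble, both of which are excluded here by the uniform gradient bound and the winding number being preserved in the limit), so $\widetilde u_{\tau_0}$ still satisfies the hypotheses of Theorem \ref{thm:implicit}. The implicit function theorem then produces a smooth family $\widetilde u(\tau)$ for $\tau \in (\tau_0 - \epsilon, \tau_0 + \epsilon)$ extending our Bishop family past $\tau_0$, contradicting maximality.

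The main obstacle, as I see it, is the compactness step in the noncompact target $\widehat M$: one must rule out loss of mass or the disks drifting along the $\R$-factor of the symplectization. Fortunately the uniform gradient bound plus the fixed boundary locus in the compact $F$ makes this into a soft diameter argument rather than a serious analytic concern. Verifying that embeddedness and the winding condition are preserved in the $C^\infty$ limit is the other delicate point, but both are open/closed conditions under strong convergence, and the normalization \eqref{eqn:normal} prevents degeneration of the parametrization.
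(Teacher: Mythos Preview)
Your proposal is correct and follows essentially the same route as the paper: the paper's argument (given in the paragraph immediately preceding the theorem) is exactly the contradiction via uniform $C^\infty$-bounds from the elliptic estimate, image compactness from the gradient bound and the boundary in $F$, Arzel\`a--Ascoli, and extension by Theorem~\ref{thm:implicit} against maximality. You have simply made explicit a few points the paper leaves implicit (the diameter argument for image compactness in the noncompact $\widehat M$, and persistence of conditions (2) and (5) for the limit disk), but the strategy is identical.
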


\section{Existence of a finite energy plane}
The gradient explosion in the previous section will guarantee the existence of a finite energy plane by the rescaling argument. 
The non-compactness of $\R$ and $\widehat M$ causes analytical difficulties in the rescaling process.
It is already studied that the escape phenomenon of the gradient exploding disks in $\R$-direction.
In this section we mainly discuss the analytical issue from $\widehat M$-direction.

For a $\widetilde J$-holomorphic map $\widetilde u=(a,u):\D\to\R\times\widehat M$
\bea\label{eqn:Jhol'}
\p_s\widetilde u+\widetilde J(\widetilde u)\p_t\widetilde u=0,
\eea
we define an {\em energy} $E(\widetilde u)$ by
\bea\label{eqn:energy}
E(\widetilde u):=\sup_{\varphi\in\Sigma}\int_\D\widetilde u^*d(\varphi\lambda),
\eea
where $\Sigma=\{\varphi\in C^\infty(\R,[0,1])\,|\,\varphi'\geq 0\}$.
Recall that (\ref{eqn:Jhol'}) is equivalent to
\bean
\pi_\lambda\p_s u+J(u)\pi_\lambda\p_t u&=0\\
u^*\lambda\circ i&=da
\eea
and we remark that the integrand in (\ref{eqn:energy}) is nonnegative.
By a simple computation we check that
\bea\label{eqn:dia+star}
\widetilde u^*d(\varphi\lambda)
=&\frac{1}{2}\varphi'(a)\underbrace{[a_s^2+a_t^2+(\lambda(u)u_s)^2+(\lambda(u)u_t)^2]}_{=:\diamond}ds\wedge dt\\
&+\frac{1}{2}\varphi(a)\underbrace{[|\pi_\lambda u_s|^2+|\pi_\lambda u_t|^2]}_{=:\star}ds\wedge dt\geq0.
\eea
Note that the Reeb direction of $du$ contributes to $\diamond$-term, while $\star$-term comes from the contact plane part.
By the boundary condition of the Bishop family we have the following uniform energy bound.

\begin{Lemma}[\cite{Hof}]\label{lem:ebound}
Let $F$ be an overtwisted disk as above and let $\widetilde u$ be a solution of (\ref{eqn:Jhol'}) satisfying 
the boundary condition in the definition of the Bishop family.
Then there exists a constant $C=C(\lambda,F)>0$ so that
\bean
E(\widetilde u)\leq C.
\eea
Especially $C$ does not depend on $\widetilde u$.
\end{Lemma}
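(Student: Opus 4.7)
My plan is a double application of Stokes' theorem: first I would push the $2$-dimensional energy integral on $\D$ out to the boundary, and then I would push it back in across the overtwisted disk $F$.

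For the first step, note that $\lambda$ is pulled back to $\R\times\widehat M$ from $\widehat M$, so $\widetilde u^*(\varphi\lambda)=\varphi(a)\,u^*\lambda$, and Stokes gives
\begin{equation*}
\int_\D\widetilde u^*d(\varphi\lambda)=\int_{\p\D}\varphi(a)\,u^*\lambda.
\end{equation*}
The Bishop boundary condition $\widetilde u(\p\D)\subset\{0\}\times F$ forces $a\equiv 0$ on $\p\D$, so $\varphi(a)=\varphi(0)$ is constant and $\leq 1$; hence
\begin{equation*}
\int_\D\widetilde u^*d(\varphi\lambda)\leq\left|\int_{\p\D}u^*\lambda\right|.
\end{equation*}

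For the second step, I would apply Stokes again, this time inside the compact $2$-disk $F$. By condition~(5) in the definition of the Bishop family the loop $u|_{\p\D}$ is embedded in $F$ and winds once around the elliptic singularity $e$, so by the Jordan curve theorem in $F$ it bounds a unique sub-disk $D'\subset F$. Consequently
\begin{equation*}
\left|\int_{\p\D}u^*\lambda\right|=\left|\int_{D'}d\lambda\right|\leq C(\lambda,F),
\end{equation*}
where $C(\lambda,F)$ is the integral over $F$ of the absolute value of the smooth $2$-form $d\lambda|_F$ (computed pointwise with respect to any area form on the compact surface $F$). This constant is manifestly independent of $\widetilde u$, and taking the supremum over $\varphi\in\Sigma$ then yields $E(\widetilde u)\leq C(\lambda,F)$.

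The argument is essentially formal once the boundary loop is known to be embedded with winding number one, so I do not expect any substantial obstacle. One point worth flagging is that $\widehat M$ is typically non-compact in the virtual contact setting, but every quantity entering the final estimate lives on the compact surface $F\subset\widehat M$, so the non-compactness of $\widehat M$ plays no role whatsoever. If one later wished to relax the embedding or winding hypothesis, the constant would need to absorb the algebraic degree with which $u|_{\p\D}$ covers regions of $F$; this is the only place where the proof could become nontrivial.
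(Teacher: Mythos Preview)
The paper does not supply its own proof of this lemma; it merely cites Hofer~\cite{Hof} and moves on. Your argument is exactly the classical two-step Stokes computation one finds in Hofer's original paper: reduce the energy to a boundary integral using $a|_{\p\D}\equiv 0$ and $0\le\varphi(0)\le 1$, then bound that boundary integral by the $d\lambda$-area of the sub-disk of $F$ cut out by the embedded loop $u(\p\D)$. Since the paper has no proof to compare against, there is nothing further to say beyond confirming that your write-up is correct and is precisely the argument the citation points to.
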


Before stating the existence of a finite energy plane we introduce 
the following helpful lemma, so called Hofer's lemma, which will be used to find a suitable sequence in the rescaling argument. 
\begin{Lemma}[\cite{Hof}]\label{lem:hofer}
Let $(W,m)$ be a complete metric space and $R:W\to[0,\infty)$ a continuous function.
Assume $x_0\in W$ and $\epsilon_0>0$ are given.
Then there exist $x\in B_{2\epsilon_0}(x_0)$ and $\epsilon\in(0,\epsilon_0]$ satisfying
\bean
R(x_0)\epsilon_0&\leq R(x)\epsilon\,;\\
R(y)&\leq 2R(x)\quad\text{for }y\in B_\epsilon(x). 
\eea
\end{Lemma}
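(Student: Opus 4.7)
The plan is a proof by contradiction via an iterative doubling construction. Assuming the conclusion fails, I would build inductively a sequence $(x_k,\epsilon_k)_{k\geq 0}$ starting from the given $(x_0,\epsilon_0)$, with the recipe $\epsilon_{k+1}:=\epsilon_k/2$ and $x_{k+1}$ chosen as a point witnessing the failure of the second conclusion at $(x_k,\epsilon_k)$, i.e.\ $x_{k+1}\in B_{\epsilon_k}(x_k)$ with $R(x_{k+1})>2R(x_k)$. Such an $x_{k+1}$ must exist at each stage because, by the standing hypothesis, no legal candidate $(x,\epsilon)$ can satisfy the second bullet.

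Along the way I would maintain the invariant that each $(x_k,\epsilon_k)$ remains a legal candidate, meaning $x_k\in B_{2\epsilon_0}(x_0)$, $\epsilon_k\in(0,\epsilon_0]$, and $R(x_0)\epsilon_0\leq R(x_k)\epsilon_k$, so that the contradiction hypothesis keeps applying. The position bound comes from the telescoping estimate $d(x_{k+1},x_0)<\sum_{i=0}^{k}\epsilon_i<\sum_{i\geq 0}\epsilon_0/2^{i}=2\epsilon_0$; the range of $\epsilon_k$ is immediate; and the third condition propagates because
\[
R(x_{k+1})\epsilon_{k+1}>2R(x_k)\cdot\tfrac{\epsilon_k}{2}=R(x_k)\epsilon_k\geq R(x_0)\epsilon_0.
\]

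To close the argument, I would invoke completeness of $W$: the same telescoping estimate shows $(x_k)$ is Cauchy, so it converges to some $x_\infty\in\overline{B_{2\epsilon_0}(x_0)}$. Meanwhile, the doubling $R(x_{k+1})>2R(x_k)$ iterates to $R(x_k)>2^{k}R(x_0)$ when $R(x_0)>0$, and when $R(x_0)=0$ it first gives $R(x_1)>0$ and then $R(x_k)>2^{k-1}R(x_1)$ for $k\geq 1$. Either way $R(x_k)\to\infty$, contradicting continuity of $R$ at $x_\infty$, since $R(x_\infty)$ must be a finite nonnegative real number.

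The proof is short and largely mechanical; the only small obstacle I anticipate is the bookkeeping around the $R(x_0)=0$ case, where the first inequality $R(x_0)\epsilon_0\leq R(x)\epsilon$ is automatic and the doubling recursion only becomes productive starting from $x_1$. Everything else reduces to verifying that the three invariants above propagate through each step of the recursion.
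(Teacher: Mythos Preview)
Your argument is correct and is precisely the standard proof of Hofer's lemma: the iterative halving of $\epsilon$ together with the doubling of $R$ produces a Cauchy sequence along which $R$ blows up, contradicting continuity. The paper does not supply its own proof of this lemma but merely cites \cite{Hof}, where exactly this argument appears, so there is nothing further to compare.
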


\begin{Thm}\label{thm:fep}
Let $M$ be a closed 3-manifold equipped with a smooth virtual contact structure 
$(p:\widehat M\to M,\om,\lambda)$ and an overtwisted disk $F\subset\widehat M$.
Let $\B=(\widetilde u_\tau)_{0\leq\tau<\tau_0}$ be a normalized maximal Bishop family on $(\R\times\widehat M,\widetilde J)$ 
which emerges from $F$.
Morover we have $\sup_{\tau\in[0,\tau_0)}\|\nabla\widetilde u_\tau\|=\infty.$
Then there exist an almost complex structure $\widetilde J^\infty$ on $\R\times\widehat M$
and a non-constant $\widetilde J^\infty$-holomorphic map $\widetilde v^\infty=(b^\infty,v^\infty):\C\to\R\times\widehat M$
with finite energy.\footnote{Here the energy is given by the contact form $\lambda_\infty$ in (\ref{eqn:vljinfty}).}
\end{Thm}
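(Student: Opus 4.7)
The plan is to carry out Hofer's rescaling argument, adapted so that the noncompactness of $\widehat M$ is absorbed by the deck transformation group $G$, using the smoothness hypothesis on the virtual contact structure to control the almost complex structure in the limit.

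First I would use Theorem \ref{thm:gradexp} to pick $\tau_k\nearrow\tau_0$ and points $z_k\in\D$ at which $R_k:=|\nabla\widetilde u_{\tau_k}(z_k)|=\|\nabla\widetilde u_{\tau_k}\|_{C^0(\D)}\to\infty$. Theorem \ref{thm:bdsing} forces the $z_k$ to stay in $\{|z|\leq 1-\epsilon\}$ for $k$ large, so gradient explosion is an interior phenomenon. Apply Lemma \ref{lem:hofer} with $R(z)=|\nabla\widetilde u_{\tau_k}(z)|$, $x_0=z_k$, and a choice of $\epsilon_{0,k}\to 0$ with $R_k\epsilon_{0,k}\to\infty$; this yields new centers $z_k'$ and radii $\epsilon_k'$ with $R_k':=|\nabla\widetilde u_{\tau_k}(z_k')|\geq R_k$, $R_k'\epsilon_k'\to\infty$, and $|\nabla\widetilde u_{\tau_k}(w)|\leq 2R_k'$ for $w\in B_{\epsilon_k'}(z_k')$. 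Now rescale: set
\[
\widetilde v_k(z):=\bigl(a_{\tau_k}(z_k'+z/R_k')-a_{\tau_k}(z_k'),\ u_{\tau_k}(z_k'+z/R_k')\bigr),\qquad|z|\leq R_k'\epsilon_k'.
\]
Since translation in the $\R$-factor is a $\widetilde J$-automorphism, each $\widetilde v_k$ is $\widetilde J$-holomorphic, with $|\nabla\widetilde v_k(0)|=1$ and $|\nabla\widetilde v_k|\leq 2$ on its domain.

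Next I would push the centers back to a fundamental domain: choose $g_k\in G$ so that $g_k\bigl(u_{\tau_k}(z_k')\bigr)$ lies in a fixed precompact fundamental domain $D_0\subset\widehat M$, and set $\widetilde w_k:=(\id_\R\times g_k)\circ\widetilde v_k$. Then $\widetilde w_k$ is $\widetilde J_k$-holomorphic with $\widetilde J_k:=(\id_\R\times g_k)_*\widetilde J$, which by the recipe (\ref{eqn:acs1})--(\ref{eqn:acs2}) is determined by the pulled-back contact form $\lambda_k:=(g_k^{-1})^*\lambda$. Because the virtual contact structure is smooth in the sense of Definition \ref{def:vcs}, all $G$-invariant covariant derivatives of $\lambda$ are uniformly bounded on $\widehat M$, so the sequence $(\lambda_k)$ is uniformly bounded in every $C^l$. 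An Arzel\`a--Ascoli diagonal extraction gives a subsequence with $\lambda_k\to\lambda^\infty$ in $C^\infty_{\rm loc}(\widehat M)$, and the virtual contact inequalities (\ref{eqn:vircond}) pass to the limit, so $\lambda^\infty$ is again a contact form with Reeb vector field $X^{\lambda^\infty}$ bounded away from $0$ and $\infty$; accordingly $\widetilde J_k\to\widetilde J^\infty$ in $C^\infty_{\rm loc}(\R\times\widehat M)$ for the almost complex structure $\widetilde J^\infty$ built from $\lambda^\infty$.

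Having normalized so that $b_k(0)=0$ and $g_k\bigl(v_k(0)\bigr)\in D_0$, with $|\nabla\widetilde w_k|\leq 2$, the maps $\widetilde w_k$ are locally uniformly bounded in $C^1$, and standard elliptic bootstrapping for the $\widetilde J_k$-holomorphic equation with smoothly converging coefficients yields $C^\infty_{\rm loc}$-subconvergence $\widetilde w_k\to\widetilde v^\infty=(b^\infty,v^\infty):\C\to\R\times\widehat M$, which is $\widetilde J^\infty$-holomorphic. Non-constancy is built in: $|\nabla\widetilde v^\infty(0)|=1$. The finite energy follows from Lemma \ref{lem:ebound}, since $\widetilde w_k^*d(\varphi\lambda_k)=\widetilde v_k^*d(\varphi\circ{\rm shift}\cdot\lambda)$ and pullback by $(\id_\R\times g_k)$ preserves the energy integrand, giving the uniform bound $E(\widetilde w_k)\leq C$ that descends to $E(\widetilde v^\infty)\leq C$ via Fatou.

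The main obstacle is the third step: the deck group does not preserve $\lambda$, $\xi$, $J$ or $\widetilde J$ (Remark \ref{rmk:notjinv}), so the rescaled maps are holomorphic for a moving almost complex structure. Extracting a smooth limit is exactly where the smoothness hypothesis on the virtual contact structure is indispensable, and verifying that $\lambda^\infty$ remains a genuine contact form (i.e.\ that the virtual contact inequalities survive) is the delicate point.
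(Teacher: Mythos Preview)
Your proposal is correct and follows essentially the same approach as the paper: rescale via Hofer's lemma, pull back by deck transformations $g_k$ to a fixed fundamental domain, use the smoothness hypothesis to run Arzel\`a--Ascoli on the sequence $\lambda_k=(g_k^{-1})^*\lambda$ and hence on $\widetilde J_k$, invoke the virtual contact inequality $\lambda(\widehat X)\geq\epsilon$ to ensure the limit $\lambda^\infty$ is genuinely contact (the paper isolates this as a separate lemma), and then conclude by elliptic bootstrapping and the uniform energy bound. The only cosmetic differences are the order in which you apply the deck transformation versus the rescaling and the paper's more explicit diagonal exhaustion $B^n\subset\C$, $E^n\subset\widehat M$.
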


\begin{proof}
Since $\sup_{\tau\in[0,\tau_0)}\|\nabla\widetilde u_\tau\|=\infty$, 
we choose a sequence $(\widetilde u_k)_{k\in\N}$ from our Bishop family $\B$ 
satisfying 
\bean
\lim_{k\to\infty}\|\nabla\widetilde u_k\|=\infty.
\eea
We pick a sequence $(z_k)_{k\in\N}$ in $\D$ so that $R_k:=|\nabla\widetilde u_k(z_k)|\to\infty$ as $k\to\infty$.

If the image of the sequence $(u_k(z_k))_{k\in\N}$ is bounded in $\widehat M$
then we are able to directly apply the Hofer's argument to guarantee the existence of a finite energy plane. 
However, there is no a priori reason that $(u_k(z_k))_{k\in\N}$ is contained in a bounded region.
To remedy this situation we will use the compactness of $M$ 
via the projection $p:\widehat M\to M$.
If we consider the sequence $\underline u_k(z_k):=p\circ u_k(z_k)\in M$ 
then it has a convergent subsequence on $M$, we still denote $u_k, z_k$.
Let us fix a fundamental domain $\underline M\subset \widehat M$ 
with respect to the deck-transformation on $\widehat M$.
Now we choose a sequence of deck-transformations $g_k\in G$ such that each $g_k^{-1}\circ u_k(z_k)$
is contained in the fixed fundamental domain $\underline M$.

In order to rescale the gradient explosion, take a sequence $\epsilon_k\to0$ so that $R_k\epsilon_k\to\infty$.
Using Lemma \ref{lem:hofer}, by slightly changing $z_k$ and $\epsilon_k$, we may assume in addition that
$|\nabla\widetilde u_k(z)|\leq 2R_k$
for all $z\in\D$ with $|z-z_k|\leq\epsilon_k$.
We define a sequence of maps $\widetilde v_k : B_{R_k}(-R_k z_k)\to \R\times\widehat M$ by
\bea\label{eqn:v_k}
\widetilde v_k(z)&:=(b_k,v_k)\\
&:=\left(a_k(z_k+\frac{z}{R_k})-a_k(z_k),\, g_k^{-1}\circ u_k(z_k+\frac{z}{R_k})\right),
\eea
so that
\bea\label{eqn:0image}
b_k(0)=0,\quad v_k(0)\in\underline M
\eea
and
\bean
|\nabla\widetilde v_k(0)|=1.
\eea
Note that a sequence of domains 
\bea\label{eqn:Bk}
B^k:=B_{R_k}(-R_kz_k)\cap B_{\epsilon_k R_k}(0)
\eea
satisfies $\bigcup_{k\in\N}B^k=\C$,\footnote{Here we use Theorem \ref{thm:bdsing}. 
If the sequence $(z_k)_{k\in\N}$ converges to $\p\D$, then $\cup_{k\in\N}B^k$ could be an upper half plane.} and we have a uniform gradient bound 
\bea\label{eqn:ugbvk}
|\nabla\widetilde v_k(z)|\leq 2\quad\text{ for }z\in B^k.
\eea

Now we consider a sequence of restrictions $(\widetilde v_k|_{B^1})_{k\in\N}$.
The conditions (\ref{eqn:0image}), (\ref{eqn:ugbvk}) imply that
the image $\widetilde v_k(B^1)$ is uniformly bounded in $\R\times\widehat M$.
By applying the Arzel\`a-Ascoli theorem we have a subsequence, again denote $\widetilde v_{k}|_{B^1}$,
and a continuous map $\widetilde v^1:B^1\to\R\times\widehat M$ such that
$\widetilde v_k|_{B^1}$ converges uniformly to $\widetilde v_1$.
Recall from Remark \ref{rmk:notjinv} that the almost complex structure $\widetilde J$ on $\R\times\widehat M$ 
is not invariant under the $G$-action, and hence
$g_k^{-1} \widetilde u_k:=(a_k,g_k^{-1}\circ u_k),\ \widetilde v_k$, and $\widetilde v^1$ cannot be $\widetilde J$-holomorphic.
So we need to find new almost complex structures which make $\widetilde v_k$, $\widetilde v^1$ to be pseudo-holomorphic.

First choose a compact subset $E^1\subset\widehat M$ 
containing $\bigcup_{k}v_{k}(B^1(0))$.
Let us define a sequence of almost complex structures $(\widetilde J_k)_{k\in\N}$ on $\R\times\widehat M$ by
\bean
\widetilde J_k(a,u)(h,l):=dg^{-1}_k[\widetilde J(a,g_k u)(h,dg_k l)],
\eea
then $g_k^{-1}\widetilde u_k$ becomes $\widetilde J_k$-holomorphic
and so does $\widetilde v_k$.
In order to understand the sequence of almost complex structures $(\widetilde J_k)_{k\in\N}$ and their limits
we consider the following sequences
\bean
\lambda_k:=g_k^*\lambda,\quad
\xi_k:=\ker \lambda_k,\quad
X_k:=(g_k^{-1})_*X^\lambda,\quad 
J_k:=\widetilde J_k|_\xi.
\eea
because of the construction of $\widetilde J$, (\ref{eqn:acs1}) and (\ref{eqn:acs2}).
More precisely, we have
\bea\label{eqn:krel}
\lambda_k(x)v&= \lambda (g_k x)dg_k v;\\
\xi_k(x)&=\{dg_k^{-1}w\,|\,w\in\xi(g_k x)\};\\
X_k(x)&=dg^{-1}_kX^\lambda(g_k x);\\
J_k(x)v&=dg_k^{-1}[J(g_k x)dg_k v],
\eea
where $x\in\widehat M$ and $v\in T_x\widehat M$.
Note that $X_k$ is a Reeb vector field of $\lambda_k$ and $J_k$ is an almost complex structure on $\xi_k$.

For convenience, we choose a smooth coframe field $\{\rho_1,\rho_2,\rho_3\}$ of the closed 3-manifold $M$.\footnote{Here we use that the tangent bundle $TQ$ of any closed 3-manifold $Q$ is trivial.}
Then its lift $\{\widehat\rho_1,\widehat\rho_2,\widehat\rho_3\}$ gives an induced coframe on $\widehat M$. 
There are coefficient functions $c_1,c_2,c_3\in C^\infty(\widehat M)$ for $\lambda\in\Om^1(\widehat M)$ satisfying
\bean
\lambda=c_1\widehat\rho_1+c_2\widehat\rho_2+c_3\widehat\rho_3.
\eea
By the smooth condition for the virtual contact structure in Definition \ref{def:vcs} 
and the compactness of $M$, 
all higher directional derivatives $\nabla^l_{\bf Y}c_i$, $i=1,2,3$ are exist and uniformly bounded.
Here ${\bf Y}$ is again $l$-pairs of $G$-invariant smooth vector fields.
Note that the following simple observation
\bean
\nabla^l_{\bf Y}(c_i\circ g_k)(x)=\nabla^l_{dg_k({\bf Y})}c_i(g_k(x))=\nabla^l_{\bf Y}c_i(g_k(x))
\eea
holds for $x\in\widehat M$, $g_k\in G$, and $i=1,2,3$.
So all higher directional derivatives of $c_i\circ g_k$ are also exist and uniformly bounded.
Now we apply the Arzel\`a-Ascoli theorem to the sequence of 1-forms
\bean
\lambda_k|_{E^1}=\sum_{i=1}^3(c_i\circ g_k)\widehat \rho_i|_{E^1},
\eea
in order to obtain a subsequence, still denote $\lambda_k$,
which converges to $\lambda^1\in\Om^1(E^1)$ in $C^\infty$.


\begin{Lemma}\label{lem:lambda1}
The limit 1-form $\lambda^1$ is a contact 1-form on int$E^1$.
\end{Lemma}
\begin{proof}
Since $\lambda$ is a contact 1-form on $\widehat M$, $\lambda_{k}=g_k^*\lambda$ satisfies
\bean
\lambda_{k}\wedge d\lambda_{k}>0\quad\text{ on int}E^1
\eea
for each $k\in\N$. As a limit of $\lambda_k$, however, $\lambda^1$ may degenerate and could satisfy 
$\lambda^1\wedge d\lambda^1\geq0$ so it is needed to exclude the case $\lambda^1\wedge d\lambda^1=0$.
Suppose that there is $x\in {\rm int}E^1$ such that $\lambda^1(x)\wedge d\lambda^1(x)=0$.
By the construction of $\lambda^1$, 
its exterior derivative $d\lambda_k|_{E^1}$ also converges to $d\lambda^1$.
Thus we have
\bean
d\lambda_k|_{E^1}=dg_k^*\lambda|_{E^1}=g_k^*d\lambda|_{E^1}=g_k^*\widehat\om|_{E^1}=\widehat \om|_{E^1}
\longrightarrow d\lambda^1,
\eea
which implies $\lambda^1(x)\wedge\widehat\om(x)=0$.
Let us recall the decomposition $T_x\widehat M\cong\R\widehat X(x)\oplus\ker\lambda(x)$
from (\ref{eqn:condec}) where $\widehat X(x)$ is defined in Definition \ref{def:vcs} 
which generates $\ker\widehat\om(x)$ and $G$-invariant.
So we deduce
\bean
0&=\iota_{\widehat X(x)}(\lambda^1(x)\wedge\widehat\om(x)) \\
&=\lambda^1(x)(\widehat X(x))\cdot\widehat\om(x)+\lambda^1(x)\wedge\iota_{\widehat X(x)}\widehat\om(x) \\
&=\lambda^1(x)(\widehat X(x))\cdot\widehat\om(x).
\eea
Since $\widehat\om(x)\neq0$ on $\ker\lambda(x)$,
we deduce $\lambda^1(x)(\widehat X(x))=0$.
From the definition of $\lambda^1, \lambda_i$ we obtain
\bean
0&=\lambda^1(x)(\widehat X(x))\\
&=\lim_{k\to\infty}\lambda_{k}(x)(\widehat X(x))\\
&=\lim_{k\to\infty}(g_{k}^*\lambda)(x)(\widehat X(x))\\
&=\lim_{k\to\infty}\lambda(g_k x)(dg_k(\widehat Xx))\\
&=\lim_{k\to\infty}\lambda(g_{k}x)(\widehat X(g_{k}x)),
\eea
where the last equality comes from that $\widehat X$ is $G$-invariant.
But this cannot be possible because of the virtual contact condition in Definition \ref{def:vcs}
\bean
\inf_{x\in\widehat M}\lambda(x)(\widehat X(x))\geq\epsilon>0.
\eea
\end{proof}
By this contact 1-form $\lambda^1$ 
we can construct the corresponding contact structure $\xi^1:=\ker\lambda^1$, 
the Reeb vector field $X^1$ for $\lambda^1$ on $E^1$ 
with the decomposition $TE^1=\R X^1\oplus\xi^1$ and the projection $\pi^1:TE^1\to\xi^1$.
By the same construction we define
the almost complex structures $J^1,\widetilde J^1$ on $\xi^1,\, T(\R\times E^1)$ by satisfying
\bea\label{eqn:lambdaJ}
\widehat m_x(\pi^1 k_1,\pi^1 k_2)&=\widehat\om_x(\pi^1 k_1,J^1(x)\pi^1 k_2),\\
\widetilde J^1(a,u)(h,k)&=(-\lambda^1(u)k,J^1(u)\pi^1 k+h\cdot X^1(u)).
\eea 
Note here that the vector field $X^1$ is different from $X^\lambda|_{E^1}$ but both vector fields generate $\ker\widehat\om|_{E^1}$.

\begin{Lemma}\label{lem:Cinfty}
As in the above setting, $\lambda_{k}|_{E^1}\stackrel{C^\infty}{\longrightarrow}\lambda^1$ implies
$\widetilde J_{k}|_{E^1}\stackrel{C^\infty}{\longrightarrow}\widetilde J^1$.
\end{Lemma}

\begin{proof}
Let us define $\mu_k:=\lambda_k|_{E^1}-\lambda^1\in\Om^1(E^1)$ which converges to $0$ in $C^\infty$ and
recall that $X_k$ the Reeb vector field of $\lambda_k$.
Since $d\lambda_k |_{E^1}=\widehat\om |_{E^1}=d\lambda^1$, 
we infer $\R X_k=\ker\widehat\om=\R X^1$ and hence
$X_k|_{E^1}$ is a rescaling vector field of $X^1$.
More precisely,
\bean
X_k|_{E^1}=\frac{1}{1+\mu_k(X^1)}X^1.
\eea
Here $X^1$ is a bounded smooth vector field on $E^1$ and hence 
$X_k|_{E^1}$ also $C^\infty$-converges to $X^1$.
Note that $\pi_{k}:TE^1\to\xi_k|_{E^1}$ is equal to $\one_{TE^1}-\lambda_{k}(_-)X_k$
and $\pi^1=\one_{TE^1}-\lambda^1(_-)X^1$.
Since
\bea\label{eqn:LRcon}
\lambda_{k}|_{E^1}\stackrel{C^\infty}{\longrightarrow}\lambda^1, \qquad
X_{k}|_{E^1}\stackrel{C^\infty}{\longrightarrow}X^1,
\eea
$\pi_k |_{E^1}$ converges to $\pi^1$ in $C^\infty$.
Now consider a sequence of metrics $\big(\widehat m(\pi_{k}{_-} ,\pi_{k}{_-})\big)_{k\in\N}$ on $\xi_k$ then
we subsequently have 
\bea\label{eqn:mconv}
\widehat m(\pi_{k}{_-} ,\pi_{k}{_-})\stackrel{C^\infty}{\longrightarrow}\widehat m(\pi^1{_-},\pi^1{_-})
\eea
as symmetric bilinear forms on $E^1$.

We now define a metric $m_k$ on $\xi_k$ by
\bean
(m_k)_x(\pi_k h,\pi_k l):=\widehat\om_x(\pi_k h,J_k(x)\pi_k l),
\eea
where $\widetilde J_k$ is the almost complex structure in (\ref{eqn:krel}).
For $h,l\in TE^1$ we then have
\bea\label{eqn:m_k}
(m_k)_x(\pi_k h,\pi_k l)&=\widehat\om_x(\pi_k h,J_k(x)\pi_k l)\\
&=\widehat\om_x(\pi_k h,dg_k^{-1}J(g_k(x))dg_k\pi_k l)\\
&=(g_k^*\widehat\om)_x(\pi_k h,dg_k^{-1}J(g_k(x))dg_k\pi_k l)\\
&=\widehat\om_{g_k(x)}(dg_k\pi_k h,J(g_k(x))dg_k\pi_k l)\\
&=\widehat m_{g_k(x)}(dg_k\pi_k h,dg_k\pi_k l)\\
&=(g_k^*\widehat m)_x(\pi_k h,\pi_k l)\\
&=\widehat m_x(\pi_k h,\pi_k l).
\eea
Here the 3rd and the last equality come from the $G$-invariance of $\widehat\om$, $\widehat m$ respectively.\footnote
{Suppose that our metric $\widehat m$ is {\em not} $G$-invariant then $\lim_{k\to\infty}g_k^*\widehat m$ may not be a metric anymore.
Moreover, in such a case, we can not define $J^1$ as in (\ref{eqn:lambdaJ}).}
By combining (\ref{eqn:lambdaJ}), (\ref{eqn:mconv}), (\ref{eqn:m_k}) we deduce
\bean
\widehat\om(\pi_k{_-},J_k\pi_k{_-})\stackrel{C^\infty}{\longrightarrow}\widehat\om(\pi^1{_-},J^1\pi^1{_-})
\eea
as 2-forms on $E^1$ and hence
\bea\label{eqn:J_kJ^1}
J_k\circ\pi_k|_{E^1}\stackrel{C^\infty}{\longrightarrow}J^1\circ\pi^1
\eea
as $(1,1)$-forms on $E^1$.

Now we are ready to compare the almost complex structures $\widetilde J_k|_{E^1}$ and $\widetilde J^1$
\bea\label{eqn:tJcon}
\widetilde J_{k}(a,u)(h,l)&=(-\lambda_{k}(u)l,J_{k}(u)\pi_k l+h\cdot X_{k}(u)),\\
\widetilde J^1(a,u)(h,l)&=(-\lambda^1(u)l,J^1(u)\pi^1 l+h\cdot X^1(u))
\eea
where $(a,u)\in \R\times E^1$, $(h,l)\in T_{(a,u)}(\R\times E^1)$.
By (\ref{eqn:LRcon}), (\ref{eqn:J_kJ^1}) and (\ref{eqn:tJcon}) we finally conclude that
$\widetilde J_k|_{E^1}$ converges to $\widetilde J^1$ in $C^\infty$-topology.
\end{proof}

Up to now, we have a sequence of triples $(v_{k_1}|_{B^1},\lambda_{k_1}|_{E^1},\widetilde J_{k_1}|_{\R\times E^1})_{k_1\in\N}$
and $(v^1,\lambda^1,\widetilde J^1)$ with the following convergence:
\bean
v_{k_1}|_{B^1}\stackrel{C^0}{\longrightarrow} v^1,\quad
\lambda_{k_1}|_{E^1}\stackrel{C^\infty}{\longrightarrow}\lambda^1,\quad
\widetilde J_{k_1}|_{\R\times E^1}\stackrel{C^\infty}{\longrightarrow}\widetilde J^1.
\eea
Let us recall the sequence of bounded domains 
$B^n=B_{R_n}(-R_n z_n)\cap B_{\epsilon_n R_n}(0)$ from (\ref{eqn:Bk})
satisfying $\bigcup_{n\in\N}B^n=\C$
and consider a sequence of compact subsets $E^n\subset \widehat M$ satisfying
\begin{itemize}
\item $E^n\subset E^{n+1}$ for all $n\in\N$;
\item $\bigcup_{k\in\N} v_k(B^n)\subset E^n$ for all $n\in\N$;
\item $\bigcup_{n\in\N}E^n=\widehat M$.
\end{itemize}

Now we apply the Arzel\`a-Ascoli theorem to the triple $(v_{k_1},\lambda_{k_1},\widetilde J_{k_1})_{k_1\in\N}$ inductively.
For $n\geq2$, we pick a subsequence\footnote{Here taking a subsequence is equivalent to choose an increasing and unbounded function $s_n:\N\to\N$ satisfying $k_n=k_{n-1}\circ s_n:\N\to\N$. Note that $k_1:\N\to\N$ is the identity map.}
$k_n$ of $k_{n-1}$ such that
there exists a continuous map $v^n:B^n\to\R\times E^n$ with
\bean
v_{k_n(j)}|_{B^n}\stackrel{C^0}{\longrightarrow} v^n\quad\text{ as } j\to\infty.
\eea
By the same argument as in Lemma \ref{lem:lambda1}, \ref{lem:Cinfty}
we obtain a contact form $\lambda^n\in\Om^1(E^n)$ and
an almost complex structure $\widetilde J^n$ on $\R\times E^n$ satisfying
\bean
\lambda_{k_n(j)}|_{E^n}\stackrel{C^\infty}{\longrightarrow}\lambda^n,\quad
\widetilde J_{k_n(j)}|_{\R\times E^n}\stackrel{C^\infty}{\longrightarrow}\widetilde J^n
\quad\text{ as } j\to\infty.
\eea
As a consequence of the above construction, we have
\bean
\widetilde v^{n+1}|_{B^n}=\widetilde v^n,\quad
\lambda^{n+1}|_{E^n}=\lambda^n,\quad
\widetilde J^{n+1}|_{\R\times E^n}=\widetilde J^n\quad\text{ for all }n\in\N.
\eea
Especially note that $\lambda^n$ determines $\widetilde J^n$ as in (\ref{eqn:lambdaJ}) and
$v^n$ is $\widetilde J^n$-holomorphic for all $n\in\N$.

We now consider the diagonal sequence of triples 
$(\widetilde v_{k_j(j)}|_{B^j},\lambda_{k_j(j)}|_{E^j},\widetilde J_{k_j(j)}|_{\R\times E^j})_{j\in\N}$,
then we obtain a continuous map $\widetilde v^\infty:\C\to\R\times\widehat M$, 
a contact form $\lambda^\infty\in\Om^1(\widehat M)$ and
an almost complex structure $\widetilde J^\infty$ on $\R\times\widehat M$
satisfying
\bea\label{eqn:vljinfty}
\widetilde v_{k_j(j)}|_{B^j}\stackrel{C^0_{loc}}{\longrightarrow} \widetilde v^\infty, \quad
\lambda_{k_j(j)}|_{E^j}\stackrel{C^\infty_{loc}}{\longrightarrow}\lambda^\infty,\quad
\widetilde J_{k_j(j)}|_{\R\times E^j}\stackrel{C^\infty_{loc}}{\longrightarrow}\widetilde J^\infty
\quad\text{ as } j\to\infty.
\eea
and
\bean
\widetilde v^{\infty}|_{B^n}=\widetilde v^n,\quad
\lambda^{\infty}|_{E^n}=\lambda^n,\quad
\widetilde J^{\infty}|_{\R\times E^n}=\widetilde J^n\quad\text{ for all }n\in\N.
\eea
Moreover, the limit 1-form $\lambda^\infty$ defines a new virtual contact structure as follows:
\begin{Lemma}\label{lem:newvcs}
As in the above setting. Let $(p:\widehat M\to M,\om,\lambda)$ be a smooth virtual contact structure,
then so is $(p:\widehat M\to M,\om,\lambda^\infty)$.
\end{Lemma}

\begin{proof}
Since $\lambda_k$ converges to $\lambda^\infty$ in $C^\infty_{loc}$, 
its exterior derivative $d\lambda_k$ also converges to $d\lambda^\infty$.
This implies that $\lambda^\infty$ is again a primitive of $\widehat \om$.
By the construction of $\lambda^\infty$ there is a sequence $(g_i)_{i\in\N}$ of deck transformations which satisfies
the following estimate for any $x\in\widehat M$, any $l\in\N$, and 
$l$-pairs of $G$-invariant smooth vector fields ${\bf Y}=(Y_1,Y_2,\dots,Y_l)$:
\bea\label{eqn:linfty1}
|\nabla^l_{\bf Y}\lambda^\infty(x)|_{\widehat m}
&=\max_{|v|_{\widehat m}=1}\lim_{i\to\infty}|\nabla_{\bf Y}^l g_i^*\lambda(x)v|\\
&=\max_{|v|_{\widehat m}=1}\lim_{i\to\infty}|g_i^*\nabla_{g_i\bf Y}^l \lambda(x)v|\\
&=\max_{|v|_{\widehat m}=1}\lim_{i\to\infty}|\nabla_{g_i\bf Y}^l \lambda(g_i x)dg_i v|\\
&=\max_{|v|_{\widehat m}=1}\lim_{i\to\infty}|\nabla_{g_i\bf Y}^l\lambda(g_i x)v|\\
&=\lim_{i\to\infty}|\nabla_{g_i\bf Y}^l \lambda(g_i x)|_{\widehat m}\\
&\leq\sup_{x\in\widehat M}|\nabla_{\bf Y}^l \lambda(x)|_{\widehat m}\\
&\leq C,
\eea
where $g_i{\bf Y}=(dg_iY_1,dg_iY_2,\dots,dg_iY_l)$. Here the 2nd, 4th, and 6th (in)equality come from the $G$-invariance of 
$\nabla$, $\widehat m$, and ${\bf Y}$ respectively and the last inequality is induced by
the smooth condition in Definition \ref{def:vcs}.
By the similar argument as in Lemma \ref{lem:lambda1} we have
\bea\label{eqn:linfty2}
\lambda^\infty(x)(\widehat X(x))
&=\lim_{i\to\infty}g_i^*\lambda(x)(\widehat X(x))\\
&=\lim_{i\to\infty}\lambda(g_i x)(dg_i(\widehat Xx))\\
&=\lim_{i\to\infty}\lambda(g_i x)(\widehat X(g_i x))\\
&\geq\inf_{x\in\widehat M}\lambda(x)(\widehat X(x))\\
&\geq\epsilon.
\eea
The above two estimates (\ref{eqn:linfty1}), (\ref{eqn:linfty2}) show that the virtual contact structure $(p:\widehat M\to M,\om,\lambda^\infty)$ is smooth.
\end{proof}




Therefore we have a continuous $\widetilde J^\infty$-holomorphic map $\widetilde v^\infty:\C\to\R\times\widehat M$.
The $C_{loc}^0$-convergence of $\widetilde v_{k_k}|_{B^k}$ to $\widetilde v^\infty$ can be improved to
$C^\infty_{loc}$-convergence 
by applying the elliptic bootstrapping argument, see \cite[Theorem B.4.2]{MS}.
Finally we have a smooth $\widetilde J^\infty$-holomorphic plane
\bean
\widetilde v^\infty=(b^\infty,v^\infty):\C\to\R\times\widehat M
\eea
which is non-constant in view of $|\nabla\widetilde v^\infty(0)|=1$.

Now it remains to show the finiteness of the energy 
\bean
E(\widetilde v^\infty)=\sup_{\varphi\in\Sigma}\int_\C\widetilde v^{\infty *}d(\varphi\lambda^\infty),
\eea
where $\Sigma=\{\varphi\in C^\infty(\R,[0,1])\,|\,\varphi'\geq0\}$.
For any compact set $K\subset\C$ there is $j\in\N$ sufficiently large so that $K\subset B^j$.
Then we have
\bean
\sup_{\varphi\in\Sigma}\int_K\widetilde v_j^* d(\varphi\lambda_j)
&\leq \sup_{\varphi\in\Sigma}\int_{B^j}\widetilde v_j^* d(\varphi\lambda_j)\\
&\leq \sup_{\varphi\in\Sigma}\int_{\D}\widetilde u_j^* d(\varphi\lambda)\\
&=E(\widetilde u_j).
\eea
Let $j\to\infty$ so that $\widetilde v_j$, $\lambda_j$ converge to $\widetilde v^\infty$, $\lambda^\infty$ in $C^\infty_{loc}$
and take the supremum over all compact set $K\subset\C$ then we obtain
\bean
E(\widetilde v^\infty)\leq& \lim_{j\to\infty}E(\widetilde u_j)\leq C,
\eea
where the constant $C$ comes from Lemma \ref{lem:ebound}. 

\end{proof}

\section{From a finite energy plane to a periodic orbit}
We will use the following notations for simplicity:
\bean
\alpha:=\lambda^\infty, \quad
\zeta:=\xi^\infty,\quad
\pi_\alpha:=\pi^\infty,\quad
I:=J^\infty,\quad
\widetilde I:=\widetilde J^\infty,\quad
b:=b^\infty,\quad
v:=v^\infty,\quad
\widetilde v:=\widetilde v^\infty.
\eea 
Lemma \ref{lem:newvcs} says that
$(p:\widehat M\to M, \om,\alpha)$ is a smooth virtual contact structure
with the decomposition $T\widehat M=\R \widehat X\oplus\zeta$,
 the projection $\pi_\alpha:T\widehat M\to\zeta$ along the Reeb direction $\widehat X$,
and the \acs s $I$, $\widetilde I$ on $\zeta$, $\R\times\widehat M$.
Moreover, Theorem \ref{thm:fep} implies that
$\widetilde v:\C\to\R\times\widehat M$ is a non-constant $\widetilde I$-holomorphic plane
with finite energy.
In other words, $\widetilde v=(b,v)$ is a solution of
\bea\label{eqn:J-hol}
\pi_\alpha\p_s v+I(v)\pi_\alpha \p_t v&=0\\
(v^*\alpha)\circ i&=db,
\eea
and
\bean
0<E(\widetilde v)=\sup_{\varphi\in\Sigma}\int_{\C}\widetilde v^*d(\varphi\alpha)<\infty,
\eea
where $\Sigma=\{\varphi\in C^\infty(\R,[0,1])\,|\,\varphi'\geq 0\}$.
Note here that $M$ is compact and hence the projection 
$\underline v:=p\circ v:\C\to M$ has a compact image,
while $v(\C)$ maybe non-compact.

The main aim of this section is to find a periodic orbit from the above finite energy plane.

\begin{Thm}\label{thm:finmain}
Let $(p:\widehat M\to M,\om,\alpha)$ be a smooth virtual contact structure
and $\widetilde v=(b,v):\C\to\R\times\widehat M$ be a solution of (\ref{eqn:J-hol}) satisfying
\bean
0<E(\widetilde v)<\infty,\quad \int_{\C}v^*d\alpha>0
\eea
then for every sequence $R_k\to\infty$ there exists a subsequence $(R_{k'})_{k'\in\N}$ such that
the $C^\infty$-limit 
\bean
x(t):=\lim_{k'\to\infty}\underline v(R_{k'}e^{2\pi it})
\eea
exists and its projection $\underline x(t)=p\circ x(t)$ defines a non-constant closed periodic solution of
\bean
\dot{\underline{x}}(t)=X(\underline x(t))
\eea
where $X$ is a non-vanishing vector field generating $\ker\om$.
\end{Thm}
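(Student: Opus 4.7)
The strategy is Hofer's classical argument for extracting a periodic Reeb orbit from a finite-energy plane, adapted to the virtual contact setting via the same deck-transformation trick used in the proof of Theorem \ref{thm:fep}. Namely: first reparametrize $\widetilde v$ as a half-cylinder, then establish a uniform gradient bound near infinity, then extract a $C^\infty$-limit loop in $M$ by Arzel\`a--Ascoli, and finally identify the limit with a reparametrization of an $X$-orbit.

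First I would pass to cylindrical coordinates $z=e^{2\pi(s+it)}$, turning $\widetilde v$ into a finite-energy half-cylinder $\widetilde w=(b,w):[0,\infty)\times S^1\to\R\times\widehat M$. The equation (\ref{eqn:J-hol}) then reads
\bean
\p_s b = \alpha(\p_t w),\quad \p_t b = -\alpha(\p_s w),\quad \pi_\alpha\p_s w+I(w)\pi_\alpha\p_t w = 0.
\eea
A standard computation shows $v^*d\alpha=|\pi_\alpha\p_s v|^2\,ds\wedge dt\geq 0$, so by Stokes the charge function $m(R):=\int_{|z|=R}v^*\alpha = \int_{|z|\leq R}v^*d\alpha$ is monotone nondecreasing and tends to $T:=\int_\C v^*d\alpha>0$ as $R\to\infty$; in cylindrical coordinates $m(R)=\int_0^1\alpha(\p_t w(\tfrac{\log R}{2\pi},t))\,dt$. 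This quantity will serve as the period of the orbit we construct.

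Next I would show that $\sup_{[s_0,\infty)\times S^1}|\nabla\widetilde w|<\infty$ for some $s_0$. If no such bound existed, Hofer's Lemma \ref{lem:hofer} would furnish a blow-up sequence which, after composition with a sequence of deck transformations $g_k\in G$ chosen exactly as in (\ref{eqn:v_k})--(\ref{eqn:Bk}), produces a nonconstant finite-energy $\widetilde J^\infty$-holomorphic plane carrying at least a fixed quantum of $d\alpha$-area. Since $E(\widetilde v)<\infty$ only finitely many such bubbles can form, and removing a compact neighborhood of their base-points yields the desired gradient bound. With equicontinuity in hand, for any $R_k\to\infty$ set $s_k:=\tfrac{1}{2\pi}\log R_k$; the loops $t\mapsto\underline w(s_k,t)\in M$ lie in the compact manifold $M$, so Arzel\`a--Ascoli extracts a $C^0$-convergent subsequence $\underline w(s_{k'},\cdot)\to x(\cdot)$, upgraded to $C^\infty$ by elliptic bootstrapping for the CR equation (cf.\ \cite[Theorem B.4.2]{MS}). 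Finiteness of the transverse energy $\int|\pi_\alpha dw|^2<\infty$ forces $\|\pi_\alpha\p_t w(s_{k'},\cdot)\|_{L^2(S^1)}\to 0$, and combined with the gradient bound and the CR equation this upgrades to $C^\infty(S^1)$-smallness; hence $\dot x(t)\in\R X^\alpha(x(t))$ pointwise with $\int_0^1\alpha(\dot x)\,dt=T>0$. Since $p_*X^\alpha$ is a positive multiple of $X$ (both span $\ker\om$), the projection $\underline x=p\circ x$ is, after an obvious time reparametrization, a nonconstant closed solution of $\dot{\underline x}=X(\underline x)$.

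\textbf{Main obstacle.} The most delicate step is the gradient bound at infinity. Each blow-up in the virtual contact cover requires its own deck-transformation normalization and passes through a fresh limit contact form as in Lemmas \ref{lem:lambda1}--\ref{lem:newvcs}; verifying that the bubbling quantum stays bounded below along such a sequence of limits relies essentially on the smoothness hypothesis of Definition \ref{def:vcs} and on the $G$-invariance of $\widehat\om,\widehat X,\widehat m$. A secondary technicality is that, since $\underline w(s_{k'},\cdot)$ converges only in $M$, the identification of $x$ with a loop in $\widehat M$ (and thus the literal reading of $\underline x=p\circ x$) requires lifting via a compatible sequence of deck transformations, which is again facilitated by the fundamental-domain construction used in the proof of Theorem \ref{thm:fep}.
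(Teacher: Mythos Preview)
Your outline matches the paper through the cylindrical reparametrization and the gradient bound (the paper's Proposition~\ref{prop:gradbdv}); note, though, that the paper does not argue via ``finitely many bubbles'' but by direct contradiction: a single blow-up limit $\widetilde\mu'$ has either $\int\mu'^*d\alpha'>0$ (forcing infinite total $d\alpha$-area from infinitely many disjoint balls) or $\int\mu'^*d\alpha'=0$ (forcing $\widetilde\mu'$ constant by Proposition~\ref{prop:const}). The real divergence is in the endgame. You try to extract a limit loop directly in $M$ and infer $\dot x\in\R X^\alpha$ from $L^2$-vanishing of the transverse component; but the CR equation, the projection $\pi_\alpha$, and the elliptic bootstrapping all live on $\widehat M$, so you are pushed back to deck-transforming---at which point the contact form and almost complex structure vary with $k$. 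The paper instead extracts, after deck-transforming by $h_k$ and shifting by $s_k$, an entire limit $\widetilde I^\infty$-holomorphic \emph{cylinder} $\widetilde w:\R\times S^1\to\R\times\widehat M$ for a limit form $\alpha^\infty$, shows $\int_{\R\times S^1} w^*d\alpha^\infty=0$ while $\int_{\{s_0\}\times S^1}w^*\alpha^\infty=T>0$, and then reruns the affine-holomorphic-function argument of Remark~\ref{rmk:consthol}: the map $\Phi=c_\rho+iq$ is a holomorphic function on $\C$ with bounded gradient, hence affine, which forces $\widetilde w$ to be literally a trivial cylinder over a $T$-periodic orbit of the Reeb field $X^{\alpha^\infty}$. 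This sidesteps your claim that $\|\pi_\alpha\partial_t w(s_{k'},\cdot)\|_{L^2}\to0$ along the \emph{given} subsequence (not automatic from $\int<\infty$ without a uniform Lipschitz bound in $s$), and it makes explicit that the orbit is Reeb for the limit form $\alpha^\infty$ rather than $\alpha$---irrelevant after projecting to $M$, since both vector fields span $\ker\widehat\omega$.
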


\begin{Prop}[\cite{Hof}]\label{prop:const}
Let $\widetilde v=(b,v):\C\to\R\times\widehat M$ solves (\ref{eqn:J-hol}) with finite energy. If
\bea\label{eqn:da=0}
\int_{\C}v^*d\alpha=0,
\eea
then $\widetilde v$ is constant.
\end{Prop}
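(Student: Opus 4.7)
The plan is to extract from $\int_\C v^*d\alpha=0$ the fact that $v$ lies in a single Reeb trajectory, encode $\widetilde v$ as an entire function $\C\to\C$, and then force that function to be constant via the finite energy bound.

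First, using the $d\alpha$-compatibility of $I$ from (\ref{eqn:acs1}) and the first equation of (\ref{eqn:J-hol}), a direct computation gives
\[
v^*d\alpha \;=\; d\alpha(\pi_\alpha v_s,\pi_\alpha v_t)\,ds\wedge dt \;=\; |\pi_\alpha v_s|^2\,ds\wedge dt \;\ge\; 0,
\]
so the hypothesis $\int_\C v^*d\alpha=0$ forces $\pi_\alpha v_s\equiv 0$ pointwise, and hence $\pi_\alpha v_t\equiv 0$ by the $\widetilde I$-holomorphic equation. Thus $dv$ is everywhere parallel to the Reeb vector field $\widehat X$ of $\alpha$, the image of $v$ lies in a single integral curve $\gamma$ of $\widehat X$, and (since $\C$ is simply connected) I can write $v(z)=\gamma(T(z))$ for some smooth $T:\C\to\R$.

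Next, using $\alpha(\widehat X)=1$ we have $\alpha(v_s)=T_s$ and $\alpha(v_t)=T_t$; inserting into the second equation $v^*\alpha\circ i=db$ of (\ref{eqn:J-hol}) yields
\[
b_s=T_t,\qquad b_t=-T_s,
\]
which are precisely the Cauchy--Riemann equations for $f:=b+iT:\C\to\C$. Hence $f$ is entire and $|f'|^2=|\nabla b|^2$ pointwise. Substituting $\pi_\alpha dv\equiv 0$ into the energy-integrand formula (3.5) kills its $\star$-summand and leaves $\widetilde v^*d(\varphi\alpha)=\varphi'(b)|f'|^2\,ds\wedge dt$, so the finite energy condition becomes
\[
\int_\C \varphi'(\mathrm{Re}\,f(z))\,|f'(z)|^2\,dA(z)\;\le\;E(\widetilde v)<\infty \qquad\text{for every }\varphi\in\Sigma.
\]

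Finally, I would derive a contradiction if $f$ is non-constant. Because $|f'|^2$ is the Jacobian determinant of $f$ viewed as a smooth map $\R^2\to\R^2$, the area formula turns the bound into
\[
\int_\C \varphi'(\mathrm{Re}\,w)\,N(w)\,dA(w)\;\le\;E(\widetilde v),
\]
where $N(w):=\#f^{-1}(w)$ counts sheets. Non-constancy of the entire $f$ combined with Picard's little theorem gives $f(\C)=\C\setminus S$ with $|S|\le 1$, so $N(w)\ge 1$ on a set of full Lebesgue measure in $\C$; picking any $\varphi\in\Sigma$ with $\varphi(+\infty)>\varphi(-\infty)$ then makes the integral equal to $(\varphi(+\infty)-\varphi(-\infty))\cdot\int_\R dv=+\infty$, contradicting $E(\widetilde v)<\infty$. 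Therefore $f$ is constant, so $b$ and $T$ are constant, and $\widetilde v$ is constant. The one technical subtlety is the area formula near critical points of $f$: these form a discrete set and their images have two-dimensional Lebesgue measure zero, so the identity above holds on the regular locus without affecting the bound.
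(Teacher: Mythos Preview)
Your proof is correct and follows exactly the route sketched in the paper's Remark~\ref{rmk:consthol}: the vanishing of $\int_\C v^*d\alpha$ kills $\pi_\alpha dv$, so $b$ becomes the real part of an entire function (your $f=b+iT$ is the paper's $\Psi=b+i\beta$, with $T$ a primitive of $v^*\alpha$), and the finite-energy hypothesis then forces that entire function to be constant. Where the paper's remark simply invokes ``Liouville's theorem'' for this last step, you supply the details via the area formula and Picard's little theorem; one cosmetic point is that in the paper's notation the Reeb vector field of $\alpha$ is a rescaling of $\widehat X$ rather than $\widehat X$ itself, but since you only use $\alpha(\text{Reeb})=1$ this does not affect the argument.
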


\begin{Rmk}\label{rmk:consthol}
The assumption (\ref{eqn:da=0}) implies that the energy from the contact plane, like $\star$-term in (\ref{eqn:dia+star}), vanishes
and then $b:\C\to\R$ becomes a harmonic function because of the $I$-holomorphic equation (\ref{eqn:J-hol}).
Indeed, $b$ can be regarded as a real part of the holomorphic function $\Psi:=b+i\beta:\C\to\C$ where $\beta:\C\to\R$ is a primitive of $v^*\alpha\in\Om^1(\C)$.
Now suppose that $\widetilde v$ is non-constant then so is $b$. 
Essentially, Liouville's theorem for $\Psi$ implies that the energy $E(\widetilde v)$ is infinite. This is a contradiction.
The above argument is still valid when $\widehat M$ is non-compact and we omit the detailed proof.
\end{Rmk}

Let $\phi:\R\times S^1\to\C\setminus\{0\}$ be a holomorphic map defined by
\bean
\phi(s,t)=e^{2\pi(s+it)},
\eea
where $S^1=\R/\Z$. For the later purpose we consider 
a $\widetilde I$-holomorphic cylinder instead of a plane.
So we define 
\bea\label{eqn:defofv}
\widetilde v_\phi:=(b_\phi,v_\phi):=(b,v)\circ\phi:\R\times S^1\to\R\times \widehat M
\eea
then we have
\bea\label{eqn:J-cyl}
\p_s\widetilde v_\phi+\widetilde I(\widetilde v_\phi)\p_t\widetilde v_\phi&=0;\\
\int_{\R\times S^1}v_\phi^*d\alpha&>0;\\
0<E(\widetilde v_\phi)=\sup_{\varphi\in\Sigma}\int_{\R\times S^1}\widetilde v_\phi^*d(\varphi\alpha)&<\infty,
\eea 
where $\Sigma$ is as before.

\begin{Prop}\label{prop:gradbdv}
Let $\widetilde v_\phi$ be a solution of (\ref{eqn:J-cyl}), then there exists some constant $l>0$ such that
\bean
|\nabla\widetilde v_\phi(s,t)|\leq l
\eea
for all $(s,t)\in\R\times S^1$.
\end{Prop}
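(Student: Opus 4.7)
The plan is to argue by contradiction, reusing the bubbling-and-rescaling machinery developed in the proof of Theorem \ref{thm:fep}. Suppose $|\nabla \widetilde v_\phi|$ is not uniformly bounded on $\R\times S^1$, and choose a sequence $(s_k,t_k)\in\R\times S^1$ with $R_k:=|\nabla\widetilde v_\phi(s_k,t_k)|\to\infty$. I would first reduce to the case $s_k\to+\infty$: on every compact subset of $\R\times S^1$ the smooth map $\widetilde v_\phi=\widetilde v\circ\phi$ has a bounded gradient by elliptic regularity, while for $s\to-\infty$ the chain rule identity $|\nabla\widetilde v_\phi(s,t)|=2\pi e^{2\pi s}|\nabla\widetilde v(\phi(s,t))|$ combined with smoothness of $\widetilde v$ at $0\in\C$ rules out blow-up.

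Following the construction in Theorem \ref{thm:fep} verbatim, I would then apply Hofer's Lemma \ref{lem:hofer} to replace $(s_k,t_k)$ and produce $\epsilon_k\to 0$ with $R_k\epsilon_k\to\infty$ and $|\nabla\widetilde v_\phi|\le 2R_k$ on the $\epsilon_k$-ball around $(s_k,t_k)$. Rescaling by $1/R_k$, shifting $b_\phi$ to vanish at the center, and translating the image in $\widehat M$ by deck transformations $g_k\in G$ chosen so that $g_k^{-1}v_\phi(s_k,t_k)$ lies in a fixed fundamental domain, yields pseudoholomorphic maps $\widetilde w_k$ on larger and larger disks with $|\nabla\widetilde w_k(0)|=1$ and $|\nabla\widetilde w_k|\le 2$. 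The diagonal Arzel\`a--Ascoli argument together with Lemmas \ref{lem:lambda1} and \ref{lem:Cinfty} extracts, after passing to a subsequence, a smooth non-constant pseudoholomorphic plane $\widetilde w^\infty=(b^\star,v^\star):\C\to\R\times\widehat M$ with respect to a new smooth virtual contact form $\alpha^\star$ (Lemma \ref{lem:newvcs}) and its associated almost complex structure $\widetilde I^\star$.

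The crux of the argument, and the step I expect to be the main obstacle, is showing that the bubble has vanishing $d\alpha^\star$-energy. For any fixed $\rho>0$, $C^\infty_{\mathrm{loc}}$-convergence of $\widetilde w_k\to\widetilde w^\infty$ and $\alpha_k\to\alpha^\star$, together with the conformal change of variables $z\mapsto (s_k,t_k)+z/R_k$ and $G$-invariance of $d\alpha$, give
\begin{equation*}
\int_{B_\rho(0)}(v^\star)^*d\alpha^\star\;=\;\lim_{k\to\infty}\int_{B_\rho(0)}w_k^*d\alpha_k\;=\;\lim_{k\to\infty}\int_{B_{\rho/R_k}(s_k,t_k)}v_\phi^*d\alpha.
\end{equation*}
Since $s_k\to+\infty$ and $\int_{\R\times S^1}v_\phi^*d\alpha<\infty$, for every $\eta>0$ there exists $S$ with $\int_{|s|>S}v_\phi^*d\alpha<\eta$; for $k$ sufficiently large $B_{\rho/R_k}(s_k,t_k)\subset\{|s|>S\}$, so the limit above is bounded by $\eta$. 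Letting $\eta\to 0$ and then $\rho\to\infty$ gives $\int_\C(v^\star)^*d\alpha^\star=0$.

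To close the argument, I would apply Proposition \ref{prop:const} to $\widetilde w^\infty$ in the new smooth virtual contact structure $(p:\widehat M\to M,\om,\alpha^\star)$, which forces $\widetilde w^\infty$ to be constant; this contradicts $|\nabla\widetilde w^\infty(0)|=1$. Two technical points require care: justifying the bubbling limit in the noncompact target via the deck-transformation trick, which goes through exactly as in Theorem \ref{thm:fep} and Lemmas \ref{lem:lambda1}, \ref{lem:Cinfty}, \ref{lem:newvcs}; and the $d\alpha$-energy vanishing just described, which rests crucially on $s_k\to+\infty$ and the uniform smallness of $d\alpha$-mass of $\widetilde v_\phi$ near the cylinder ends.
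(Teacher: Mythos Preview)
Your proposal is correct and matches the paper's proof almost step for step: bubble via Hofer's Lemma, translate by deck transformations into a fixed fundamental domain, extract a limiting plane using the machinery of Lemmas \ref{lem:lambda1}, \ref{lem:Cinfty}, \ref{lem:newvcs}, show its $d\alpha'$-energy vanishes, and contradict Proposition \ref{prop:const}. The one place you diverge is precisely the step you flagged as the crux: you use a tail-energy estimate (for $k$ large the rescaled ball sits in $\{s>S\}$, where $\int v_\phi^*d\alpha$ is small), whereas the paper argues instead that if $q:=\int_\C \mu'^*d\alpha'>0$ then one can extract a subsequence with pairwise disjoint balls $B_{\epsilon_l}(z_l)$, each carrying $d\alpha$-mass at least $q/2$, contradicting $\int_{\R\times S^1}v_\phi^*d\alpha<\infty$. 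Both arguments are short and standard; yours leans more directly on $s_k\to+\infty$, the paper's only on disjointness. One small omission: before invoking Proposition \ref{prop:const} you should record that $E(\widetilde w^\infty)\le E(\widetilde v_\phi)<\infty$, which the paper obtains by translating the test functions $\varphi\in\Sigma$ by $b_\phi(s_k,t_k)$.
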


\begin{proof}
Let $\rho:\C\to\R\times S^1:(s+it)\mapsto(s,e^{2\pi it})$ be a 1-periodic map in the $S^1$-coordinate
and let us define
\bean
\widetilde v_\rho:=(b_\rho,v_\rho):=(b_\phi,v_\phi)\circ\rho:\C\to\R\times\widehat M.
\eea
So it is equivalent to show that $|\nabla\widetilde v_\rho(s,t)|$ is bounded.
Suppose that there is a sequence $z_k\in \C$ such that
\bean
R_k:=|\nabla\widetilde v_\rho(z_k)|\to\infty.
\eea
Note that ${\rm Re}(z_k)\to\infty$ since the gradient is bounded on ${\rm Re}(z_k)\leq0$.
By applying Lemma \ref{lem:hofer} to positive real sequences $(R_k)_{k\in\N}$, $(\epsilon_k)_{k\in\N}$ with
\bean
\epsilon_k\to 0,\quad \epsilon_k R_k\to\infty,
\eea
we additionally assume that
\bea\label{eqn:grad2}
|\nabla\widetilde v_\rho(z)|\leq 2R_k\quad\text{for }z\in B_{\epsilon_k}(z_k)\subset\C.
\eea
The sequence $(v_\rho(z_k))_{k\in\N}$ may escape to the unbounded region of $\widehat M$. So we pick a sequence of deck-transformations $(f_k)_{k\in\N}$ in $G$ so that
\bean
f_k^{-1}\circ v_\rho(z_k)\in\underline M
\eea
where $\underline M$ is a fixed fundamental domain in $\widehat M$ with respect to $G$.
Now we define $\widetilde \mu_k:B_{R_k}(-R_k z_k)\to \R\times\widehat M$ by
\bea\label{eqn:v_kres}
\widetilde \mu_k(z)&:=(\beta_k(z),\mu_k(z))\\
&:=\left(b_\rho(\frac{z}{R_k}+z_k)-b_\rho(z_k),f_k^{-1}\circ v_\rho(\frac{z}{R_k}+z_k) \right).
\eea
We also define a sequence of contact forms $\alpha_k:=f_k^*\alpha$ and
a sequence of \acs s 
\bean
\widetilde I_k(a,u)(r,l):=df_k^{-1}[\widetilde I(a,f_k u)(r,df_k l)].
\eea
By the similar argument as in Theorem \ref{thm:fep}, there are a suitable subsequence $(k')_{k'\in\N}$, 
a smooth map $\widetilde \mu':\C\to\R\times\widehat M$, 
a contact form $\alpha'\in\Om^1(\widehat M)$ and
an \acs \  $\widetilde I'$ on $R\times\widehat M$ satisfying
\bean
\widetilde \mu_{k'}\stackrel{C^\infty_{loc}}{\longrightarrow} \widetilde \mu', \quad
\alpha_{k'}\stackrel{C^\infty_{loc}}{\longrightarrow}\alpha',\quad
\widetilde I_{k'}\stackrel{C^\infty_{loc}}{\longrightarrow}\widetilde I'.
\eea
Moreover, the limit contact form $\alpha'$ determines the limit \acs \ $\widetilde I'$ as in Lemma \ref{lem:Cinfty}
and $\widetilde \mu'$ is $\widetilde I'$-holomorphic, i.e.
\bean
\p_s\widetilde \mu'+\widetilde I'(\widetilde \mu')\p_t\widetilde \mu'=0.
\eea
From (\ref{eqn:v_kres}), (\ref{eqn:grad2}) we deduce
\bean
|\nabla\widetilde \mu'(0)|=1,\quad |\nabla\widetilde \mu'(z)|\leq 2\quad\text{for }z\in\C.
\eea
With a sequence of functions $\varphi_k(s):=\varphi(s-b(z_k))$ in $\Sigma$, we estimate
\bean
\int_{B_{\epsilon_k R_k}(0)}\widetilde \mu_k^* d(\varphi\alpha_k)
=\int_{B_{\epsilon_k}(z_k)}\widetilde v_\rho^* d(\varphi_k \alpha)
\leq\int_{\R\times[0,1]}\widetilde v_\rho^* d(\varphi_k \alpha)
\leq E(\widetilde v_\phi)
<\infty.
\eea
Replace $k$ with $k'$ and let $k'\to\infty$ then we deduce
\bean
E(\widetilde \mu')\leq E(\widetilde v_\phi)<\infty.
\eea

We know 
\bea\label{eqn:T}
T:=\int_{\R\times S^1}v_\phi^*d\alpha
=\int_{\R\times S^1}\widetilde v_\phi^*d(\varphi_0\alpha)
\leq E(\widetilde v_\phi)<\infty
\eea
where $\varphi_0\equiv 1$.
If $q:=\int_\C \mu'^*d\alpha'$ is positive, then there is a subsequence $l$ such that
\bean
\frac{q}{2}\leq\int_{B_{\epsilon_{l} R_{l}}(0)}\mu_l^*d\alpha_l=\int_{B_{\epsilon_{l}}(z_{l})}v^*d\alpha,
\eea
$\epsilon_l\leq\frac{1}{2}$, and $B_{\epsilon_{l}}(z_{l})$ are disjoint.
So we obtain a following contradiction
\bean
\infty=\sum_{l}\frac{q}{2}
\leq\sum_{l}\int_{B_{\epsilon_{l}}(z_{l})}v_\rho^*d\alpha 
\leq \int_{\R\times S^1}v_\phi^*d\alpha<\infty.
\eea
Consequently we have a non-constant $\widetilde I'$-holomorphic map $\widetilde \mu':\C\to\R\times\widehat M$ 
with a finite energy and $\int_\C \mu'^*d\alpha'=0$.
By Proposition \ref{prop:const} such a map $\widetilde \mu'$ cannot be possible.
Therefore $|\nabla\widetilde v_\phi(s,t)|$ is uniformly bounded.
\end{proof}

\begin{proof}[Proof of Theorem \ref{thm:finmain}]
Let $\widetilde v_\phi=(b_\phi,v_\phi):\R\times S^1\to\R\times\widehat M$ 
be the map from the previous proposition satisfying (\ref{eqn:J-cyl}).
We pick a sequence of real numbers $(s_k)_{k\in\N}$ with $\lim_{k\to\infty}s_k=\infty$ and 
note that $v_\phi(s_k,0)\in\widehat M$ may escape to the unbounded region as $k\to\infty$.
For the fixed fundamental region $\underline M\subset\widehat M$,
we choose a sequence of deck-transformations  $(h_k)_{k\in\N}$ so that
\bean
h_k^{-1}\circ v_\phi(s_k,0)\in\underline M.
\eea
Now we define a sequence of cylinder maps $\widetilde w_k:=(c_k,w_k):\R\times S^1\to\R\times\widehat M$ by
\bean
\widetilde w_k(s,t):=\left(b_\phi(s+s_k,t)-b_\phi(s_k,0),h_k^{-1}\circ v_\phi(s+s_k,t)\right).
\eea
Let us also define a corresponding sequence of contact forms $\alpha_k:=h_k^*\alpha$ and
a sequence of \acs s
\bean
\widetilde I_k(a,u)(r,l):=dh_k^{-1}[\widetilde I(a,h_k u)(r,dh_k l)].
\eea
By the similar procedure as in Theorem \ref{thm:fep} and Proposition \ref{prop:gradbdv} 
we choose a suitable subsequence $(k')_{k'\in\N}$ so that
there exist a smooth map
\bean
\widetilde w=(c,w):\R\times S^1\to\R\times\widehat M,
\eea
a contact form $\alpha^\infty\in\Om^1(\widehat M)$, and
an almost complex structure $\widetilde I^\infty$ on $\R\times\widehat M$ 
satisfying
\bea\label{eqn:wconv}
\widetilde w_{k'}\stackrel{C^\infty_{loc}}{\longrightarrow} \widetilde w, \quad
\alpha_{k'}\stackrel{C^\infty_{loc}}{\longrightarrow}\alpha^\infty,\quad
\widetilde I_{k'}\stackrel{C^\infty_{loc}}{\longrightarrow}\widetilde I^\infty.
\eea
In addition, the contact form $\alpha^\infty$ governs the almost complex structure $\widetilde I^\infty$
in the view of Lemma \ref{lem:Cinfty} and $\widetilde w$ is an $\widetilde I^\infty$-holomorphic cylinder, i.e.
\bea\label{eqn:Ihol}
\p_s\widetilde w+\widetilde I^\infty(\widetilde w)\p_t\widetilde w=0\quad\text{on }\R\times S^1.
\eea
Moreover, Proposition \ref{prop:gradbdv} implies
\bean
|\nabla\widetilde w(s,t)|\leq l\quad\text{for }(s,t)\in\R\times S^1.
\eea

From (\ref{eqn:T}) we know $T=\int_{\R\times S^1}v_\phi^*d\alpha<\infty$ and hence 
for every $R>0$ we have
\bean
\int_{[-R,R]\times S^1}w_k^*d\alpha_k
=\int_{[-R+s_k,R+s_k]\times S^1}v_\phi^*d\alpha\longrightarrow 0
\eea
as $k\to\infty$.
Thus we obtain
\bea\label{eqn:dlambda=0}
\int_{\R\times S^1}w^*d\alpha^\infty=0.\\
\eea

We know by the construction of $\widetilde v_\phi=(b_\phi,v_\phi)$ in (\ref{eqn:defofv}) 
that $v_\phi(s,t)$ converges to some point in $\widehat M$ as $s\to-\infty$.
Then for any $s_0\in\R$
\bean
\int_{\{s_0\}\times S^1}w_k^*\alpha_k
=\int_{(-\infty,s_0]\times S^1}w_k^*d\alpha_k
=\int_{(-\infty,s_0+s_k]\times S^1}v_\phi^*d\alpha
\eea
converges to $T=\int_{\R\times S^1}v^*d\alpha>0$ as $k\to\infty$.
Replace $k$ with $k'$ and passing to the limit $k'\to\infty$ we obtain
\bea\label{eqn:lambda>0}
\int_{\{s_0\}\times S^1}w^*\alpha^\infty=\int_{\R\times S^1}v_\phi^*d\alpha=T>0.
\eea

Now we consider an $\widetilde I^\infty$-holomorphic map
$\widetilde w_\rho:=(c_\rho,w_\rho):=(c,w)\circ \rho:\C\to\R\times\widehat M$ satisfying
\bea\label{eqn:Iinftyhol}
\pi^\infty_\alpha\p_s w_\rho+I^\infty(w_\rho)\p_t w_\rho&=0\\
w_\rho^*\alpha^\infty\circ i&=dc_\rho.
\eea
Here $\pi_\alpha^\infty:T\widehat M\to \ker\alpha^\infty$ be the projection along the Reeb direction
and $I^\infty=\widetilde I^\infty|_{\ker\alpha^\infty}$.
By the same argument as in Remark \ref{rmk:consthol} we deduce from (\ref{eqn:dlambda=0}) that 
$w^*d\alpha^\infty=0$ and hence
$w_\rho^*\alpha^\infty\in\Om^1(\C)$ is exact.
We introduce a primitive $q:\C\to\R$ of $w_\rho^*\alpha^\infty$ so that
\bean
\Phi:=c_\rho+iq:\C\to\C
\eea
is holomorphic.
Moreover, $w^*d\alpha^\infty=0$ implies that 
\bean
\pi_\alpha^\infty\circ dw_\rho(z):\C\to\ker\alpha^\infty(w_\rho(z))
\eea 
is the zero map for all $z\in\C$. 

Suppose that $\Phi$ is constant or $c_\rho$ is constant.
Then we have
\bean
w_\rho^*\alpha^\infty\circ i=dc_\rho=0
\eea
which means that $\p_s w_\rho, \p_t w_\rho$ have no Reeb direction component.
Since $\pi_\alpha^\infty\circ dw_\rho$ is also zero, $w_\rho$ must be a constant map.
But this contradicts
\bean
\int_{\{s_0\}\times S^1}w_\rho^*{\alpha^\infty}=T>0
\eea
and we conclude $\Phi$ is non-constant.
The gradient of $\Phi$ is bounded because of
\bean
\sup_{\C}|\nabla\Phi|=2\sup_{\C}|\nabla c_\rho|\leq 2\sup_{\C}|\nabla\widetilde w_\rho|=2\sup_{\C}|\nabla\widetilde w|<\infty.
\eea

As a consequence, $\Phi$ should be an affine non-constant holomorphic map, i.e.
\bean
\Phi(z)=lz+m
\eea
where $l,m\in\C$, $l=l_1+i\,l_2 \neq0$, $m=m_1+i\,m_2$. Then
\bean
c_\rho(z)=c_\rho(s,t)=l_1 s-l_2 t+m_1=l_1s+m_1
\eea
since $c_\rho$ is 1-periodic in $t$.
Let $X^\alpha_\infty$ be a Reeb vector field for the contact form $\alpha^\infty$ then we have
\bean
\p_s w_\rho&=\pi_\alpha^\infty\p_s w_\rho+{\alpha^\infty}(\p_s w_\rho)X^\alpha_\infty(w_\rho)\\
&={\alpha^\infty}(\p_s w_\rho)X^\alpha_\infty(w_\rho)\\
&=-\p_t c_\rho X^\alpha_\infty(w_\rho)\\
&=0
\eea
and
\bean
\p_t w_\rho&=\pi_\alpha^\infty\p_t w_\rho+{\alpha^\infty}(\p_t w_\rho)X^\alpha_\infty(w_\rho)\\
&={\alpha^\infty}(\p_t w_\rho)X^\alpha_\infty(w_\rho)\\
&=\p_s c_\rho X^\alpha_\infty(w_\rho)\\
&=l_1 X^\alpha_\infty(w_\rho).
\eea
Here we use $w_\rho^*\alpha^\infty\circ i=dc_\rho$ in (\ref{eqn:Iinftyhol}) for the 3rd equations.
Since $w_\rho$ is 1-periodic in the $t$-coordinate, 
we finally obtain a non-constant closed orbit $x(t):=w(s_0,l_1^{-1}t)$ of the Reeb vector field $X^\alpha_\infty$
which generate $\ker\widehat\om$.
Consequently a reparameterization of $\underline x(t)=p\circ x(t)$ gives us a non-constant contractible periodic orbit of the vector field $X^\om$ on $M$ which generates $\ker\om$.
\end{proof}

\begin{figure}[h]
\centering
\includegraphics[width=0.55\textwidth]{overtwisted}
\caption{schematic picture for the main theorem}
\end{figure}

\begin{proof}[Proof of Main Theorem]
By the overtwisted disk in $\widehat M$ and Theorem \ref{thm:gradexp}
we have a normalized maximal Bishop family $(\widetilde u_\tau)_{0\leq\tau<\tau_0}$ on $\R\times\widehat M$ with
\bean
\sup_{\tau\in[0,\tau_0)}\|\nabla\widetilde u_\tau\|_{C^0(\D)}=\infty.
\eea
From the rescaling argument with deck-transformation in the proof of Theorem \ref{thm:fep}
we construct an almost complex structure $\widetilde J^\infty$ as a limit and a finite energy $\widetilde J^\infty$-holomorphic plane \bean
\widetilde v=(b,v):\C\to\R\times\widehat M.
\eea
Finally Theorem \ref{thm:finmain} guarantees a contractible periodic orbit for the vector field $X^\om$ in Definition \ref{def:HS}
by considering the projection of the limit
\bean
x(t)=\lim_{R_k\to\infty}v(R_k e^{2\pi it}).
\eea
\end{proof}

\section{Examples of a virtual contact structure}

The well-known example of a virtual contact structure is an energy hypersurface of a twisted cotangent bundle above the Ma\~n\'e critical value, as mentioned in the introduction.
\begin{Ex}
On the cotangent bundle $\tau:T^*N\to N$ of a closed manifold $N$ 
we define autonomous Hamiltonian systems given by a convex Hamiltonian
\bean
H(q,p)=\frac{1}{2}|p|^2+U(q)
\eea
with a twisted symplectic form 
\bean\om_\sigma=dp\wedge dq+\tau^*\sigma.
\eea
Here $dp\wedge dq$ is the canonical symplectic form on $T^*N$, $|p|$ denote the dual norm of a Riemannian metric $g$ on $N$,
$U:\N\to\R$ is a smooth potential, and $\sigma$ is a closed 2-form on $N$.
When the pullback $\pi^*\sigma$ to the cover $\pi:\widehat N\to N$ is exact, the {\em Ma\~n\'e critical value} is defined as
\bean
c=c(g,\sigma,U,\pi):=\inf_{\theta}\sup_{q\in\widehat N}\widehat H(q,\theta_q),
\eea
where the infimum is taken over all $\theta\in\Om^1(\widehat N)$ satisfying $d\theta=\pi^*\sigma$ and $\widehat H$ is the lift of $H$ to $\widehat N$. Then $\Sigma_k:=H^{-1}(k), k>c$ admits a virtual contact structure 
\bean
\big(\pi:\pi^{-1}(\Sigma_k)\to\Sigma_k,\om_\sigma,\pi^*(p\wedge dq)+\theta_0\big)
\eea
for some primitive $\theta_0$ of $\pi^*\sigma$, see \cite[Lemma 5.1]{CFP} for the detail.
\end{Ex}

Other example is given by the mapping torus construction.
\begin{Ex}\label{ex:vcsHam}
Let $(L,\om_L)$ be a closed symplectic manifold and 
assume that $\om_L$ admits a bounded primitive on the universal cover $p:\widetilde L\to L$.
Now take a Hamiltonian diffeomorphism $\varphi$ on $(L,\om_L)$ and consider the induced mapping torus
\bean
L_\varphi:=\frac{L\times[0,1]}{(l,0)\sim(\varphi(l),1)}.
\eea
Then we can endow $L_\varphi$ with a virtual contact structure, see \cite[Remark 3.4]{Ba2} for the construction.
\end{Ex}

In the remaining section we construct a virtual contact structure with an overtwisted disk.
\begin{Ex}\label{ex:Lutz}
Let $\Sigma$ be a closed surface of genus $g\geq 2$
then $\Sigma$ can be represented by $\D/G$
with the universal covering map $p_\Sigma:\D\to\Sigma$.
Here $\D=\{x+iy\in\C\,|\,x^2+y^2<1\}$ is the Poincar\'{e} disk 
with the Poincar\'{e} metric
\bean
ds^2=\frac{dx^2+dy^2}{(1-x^2-y^2)^2}
\eea
and $G$ is a discrete group of isometry of $\D$.
Let $\om_\Sigma$ be a volume form on $\Sigma$
we then may assume
\bean
p_\Sigma^*\om_\Sigma
=\frac{2dx\wedge dy}{(1-x^2-y^2)^2}
=\frac{2rdr\wedge d\varphi}{(1-r^2)^2},
\eea
where $(r,\varphi)$ are the polar coordinates for $\C$.
Now consider $M=S^1\times\Sigma$ with the covering 
$p:S^1\times\D\to M$ and the projection
$\pi_\Sigma:M\to\Sigma$ which fit into the following diagram:
\bean
\xymatrix{
S^1\times\D
\ar[r]^{p} \ar[d]^{\pi}
&M
\ar[d]^{\pi_\Sigma}\\
\D \ar[r]^{p_\Sigma}
&\Sigma
}
\eea
The Hamiltonian structure $(M,\om_M:=\pi_\Sigma^*\om_\Sigma)$
admits a virtual contact structure by choosing 
a covering map $p:S^1\times\D\to M$ and
a primitive 
\bean
\alpha
=dt+\frac{xdy-ydx}{1-x^2-y^2}
=dt+\frac{r^2d\varphi}{1-r^2}
\eea
of $\om:=p^*\om_M$
where $t$ is the coordinate for $S^1$.\footnote{In Example \ref{ex:vcsHam}, let $(L,\om_L)$ be a closed surface $(\Sigma,\om_\Sigma)$ of genus $g\geq 2$ 
and take a Hamiltonian diffeomorphism $\varphi=\id$, then we have the above setting.}
Note here that $\om\in\Om^2(S^1\times\D)$ is $G$-invariant
but its primitive $\alpha\in\Om^1(S^1\times\D)$ is not.

In order to do a {\em  Lutz twist} on the virtual contact structure $(M,\om_M,\alpha)$, 
we start with a small constant
$\delta\in(0,\frac{1}{3}]$ satisfying
$g(B_\delta(0))\cap B_\delta(0)=\emptyset$ 
for all $g\in G$. 
Next we choose $\epsilon\in(0,\frac{\delta}{10}]$, $C=\max\{\|\alpha\|_\infty,1\}$
and then consider $C^1$-functions $f_1$, $f_2$ meeting the following conditions:
\begin{itemize}
\item $f_1(r)=-1$, $f_2(r)=\frac{-r^2}{1-r^2}$ for $r\in[0,\frac{\epsilon}{2})$;
\item $f_1(r)=1$, $f_2(r)=\frac{r^2}{1-r^2}$ for $r\in(\delta-\frac{\epsilon}{2},\delta]$;
\item $f_2(\frac{\delta}{2})=-4C$, $f_2'(\frac{\delta}{2})=0$.
\end{itemize}
The followings are not essential but we require additional conditions for simplicity:
\begin{itemize}
\item $f_1'$, $f_2'$ are piece-wise linear on $r\in[\frac{\epsilon}{2},\delta-\frac{\epsilon}{2}]$;
\item $f_1'$ is constant on $r\in[\epsilon,\delta-\epsilon]$;
\item $f_2'$ is constant on $r\in[\epsilon,\frac{\delta}{2}-\epsilon]\cup[\frac{\delta}{2}+\epsilon,\delta-\epsilon];$
\item $f_1'$ is differentiable except at $\{\frac{\epsilon}{2},\epsilon,\delta-\epsilon,\delta-\frac{\epsilon}{2}\}$;
\item $f_2'$ is differentiable except at $\{\frac{\epsilon}{2},\epsilon,\frac{\delta}{2}-\epsilon,\frac{\delta}{2},\frac{\delta}{2}+\epsilon,\delta-\epsilon,\delta-\frac{\epsilon}{2}\}$.
\end{itemize}
\begin{figure}[h]
\centering
\includegraphics[width=0.8\textwidth]{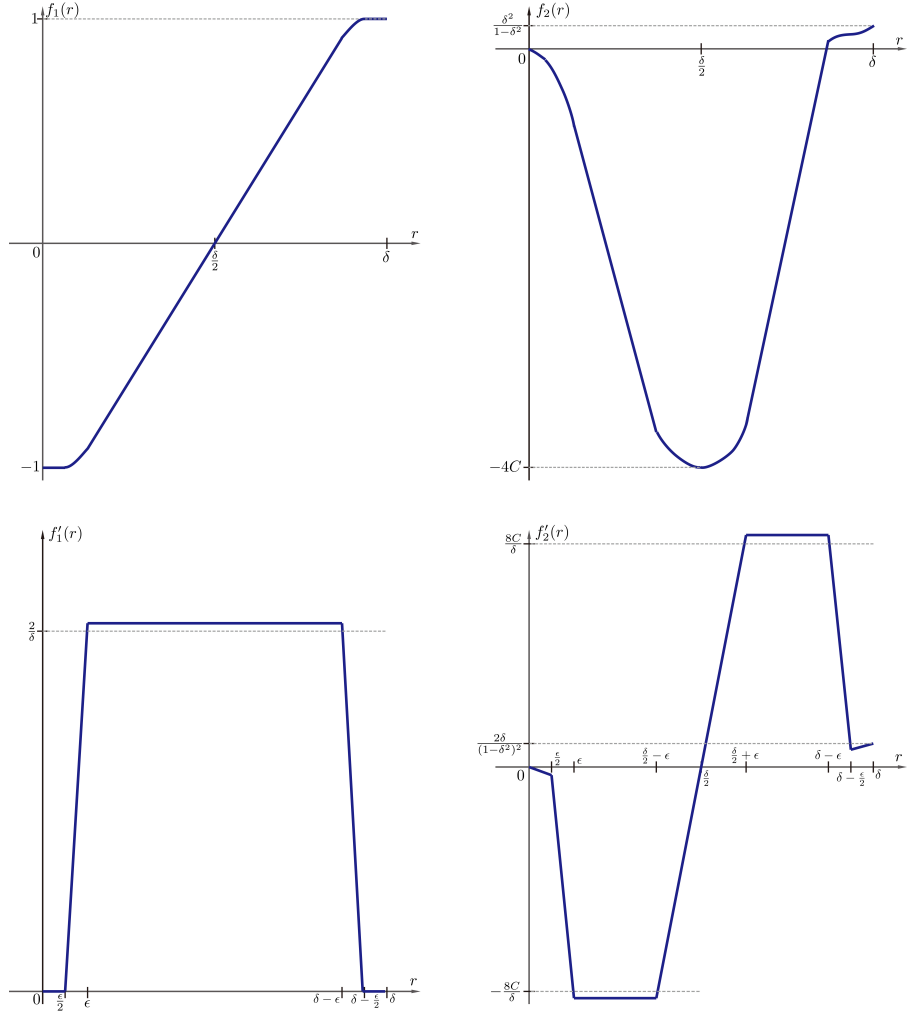}
\caption{graphs of $f_1(r)$, $f_2(r)$ and their differentials}
\end{figure}

Now we consider the 1-form 
\bea\label{eqn:alphaL}
\alpha^L_\delta:=f_1(r)dt+f_2(r)d\varphi
\eea 
on $\pi^{-1}(B_\delta(0))$
and $\eta_\delta\in\Om^1(S^1\times\D)$ 
which equals $\alpha^L_\delta-\alpha$ on $\pi^{-1}(B_\delta(0))$
and vanishes elsewhere.
From the locally supported 1-form $\eta_\delta$, 
we define a $G$-invariant 1-form $\eta\in\Om^1(S^1\times\D)$ by
\bean
\eta:=\sum_{g\in G}g^*\eta_\delta.
\eea
By the above construction there is a 1-form
$\underline\eta$ on $M$ such that $p^*\underline\eta=\eta$.
Since $\underline\eta$ is bounded, it immediately follows that
$\eta$ is also bounded with respect to 
the product metric on $S^1\times\D$.

\begin{Prop}
As in the above setting, a triple $(p:S^1\times \D\to M,\om^L:=\om_M+d\underline\eta,\alpha^L:=\alpha+\eta)$ is
a virtual contact structure equipped with an overtwisted disk.
\end{Prop}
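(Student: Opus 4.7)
The plan is to verify the conditions of Definition \ref{def:vcs} for $(p:S^1\times\D\to M,\om^L,\alpha^L)$ in turn and then to produce an embedded disk in $(S^1\times\D,\ker\alpha^L)$ satisfying Definition \ref{def:ot}. Closedness of $\om^L=\om_M+d\underline\eta$ is automatic, the identity $p^*\om^L=d\alpha^L$ is immediate from $p^*\underline\eta=\eta$, and the $C^0$-bound on $\alpha^L$ follows by writing $\alpha^L=\alpha+p^*\underline\eta$: a direct computation $|r^2/(1-r^2)\,d\varphi|_{\widehat m}=r<1$ in the lifted Poincar\'e--product metric gives $\|\alpha\|_\infty<\infty$, and $\underline\eta$ is bounded since it is defined on the compact manifold $M$. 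For maximal rank I would pull back to the cover and split each lifted disk $g(B_\delta(0))$ into three regions: on the core $r<\epsilon/2$ one has $\alpha^L=-\alpha$ and $d\alpha^L=-\om\neq0$; outside the disks $\alpha^L=\alpha$ and $d\alpha^L=\om\neq0$; on the transition annulus $d\alpha^L=f_1'(r)\,dr\wedge dt+f_2'(r)\,dr\wedge d\varphi$, which is nonzero because the piecewise-linear profile of $f_1',f_2'$ is arranged so that these derivatives do not vanish simultaneously.

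The delicate step is the lower bound $\inf_x\alpha^L(\widehat X(x))\geq\epsilon>0$. After orienting $\ker\om^L$ by the orientation of $M$, I would examine the same three regions: outside the disks $\widehat X\propto\partial_t$ and $\alpha^L(\widehat X)=\alpha(\partial_t)=1$; on the core $\om^L=-\om_M$ flips the orientation of $\ker\om^L$ so $\widehat X\propto-\partial_t$, giving $\alpha^L(\widehat X)=(-\alpha)(-\partial_t)=1$; on the transition annulus, $\ker d\alpha^L$ is generated, with the correct orientation, by $f_2'(r)\partial_t-f_1'(r)\partial_\varphi$, and the pairing $\alpha^L(\widehat X)$ is proportional to $f_1f_2'-f_2f_1'$, the signed angular velocity of the planar curve $r\mapsto(f_1(r),f_2(r))$ about the origin. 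This is the main technical obstacle: one must check that the Lutz profile can be chosen so that this curve avoids the origin and rotates monotonically in a fixed direction, yielding $f_1f_2'-f_2f_1'>0$ uniformly. The piecewise-linear data listed in the construction has enough slack to enforce this; after normalising $\widehat X$ to unit length one obtains the required uniform lower bound, and smoothness of the new virtual contact structure is automatic from the smoothness of $(f_1,f_2)$.

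For the overtwisted disk, the intermediate value theorem applied to $f_2$ on $[\delta/2,\delta-\epsilon/2]$ (using $f_2(\delta/2)=-4C<0$ and $f_2(\delta-\epsilon/2)>0$) produces $r_0\in(\delta/2,\delta-\epsilon/2)$ with $f_2(r_0)=0$; a symmetric choice of the profile makes $f_1(r_0)>0$. I would take $F=\{t_0\}\times\overline{B_{r_0}(0)}\subset S^1\times\D$ for a fixed $t_0\in S^1$ and verify Definition \ref{def:ot}: on $\partial F$, $T\partial F=\mathrm{span}(\partial_\varphi)$ and $\alpha^L(\partial_\varphi)|_{r_0}=f_2(r_0)=0$, so $T\partial F\subset\zeta$; on the punctured interior $0<r<r_0$ we have $f_2(r)\neq0$, so $\alpha^L|_F=f_2(r)\,d\varphi$ cuts out the 1-dimensional radial foliation; at the origin, $\alpha^L|_0=-dt$ annihilates $T_0F=\mathrm{span}(\partial_x,\partial_y)$, whence $T_0F=\zeta_0$. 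Ellipticity is read off from $\alpha^L|_F=(y\,dx-x\,dy)/(1-r^2)$ near the origin: the characteristic vector field is a positive multiple of the radial field $x\partial_x+y\partial_y$, whose linearisation at $0$ has eigenvalues $(1,1)$, an elliptic node. If strict transversality of $F$ to $\zeta$ along $\partial F$ is demanded, one slightly tilts the disk to $F=\{(h(r),r\cos\varphi,r\sin\varphi):0\leq r\leq r_0,\varphi\in S^1\}$ with $h\equiv t_0$ near $0$ and $h'(r_0)\neq0$, and all the other checks go through unchanged.
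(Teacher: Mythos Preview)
There is a genuine gap in your treatment of the lower bound $\inf_x\alpha^L(\widehat X)\geq\epsilon>0$ on the transition annuli. Your computation of the pairing as $f_1f_2'-f_2f_1'$ is valid only on the \emph{original} tube $S^1\times B_\delta(0)$. The crucial point, which drives the entire proof in the paper, is that $\alpha^L=\alpha+\eta$ is \emph{not} $G$-invariant: the piece $\eta$ is $G$-invariant by construction, but $\alpha$ is not. Consequently, on a translated tube $S^1\times g(B_\delta(0))$ the pairing is not $f_1f_2'-f_2f_1'$; pulling back by $g$ one obtains
\[
\alpha^L(R^L)\big|_{g\cdot x}=g^*\alpha^L(R^L)\big|_x=\alpha^L(R^L)\big|_x+(g^*\alpha-\alpha)(R^L)\big|_x,
\]
and the correction $(g^*\alpha-\alpha)(R^L)$ must be controlled \emph{uniformly in $g\in G$}. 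On the core and the outer collar this correction vanishes because $R^L$ is proportional to $\partial_t$ and $G$ acts trivially on the $t$-coordinate, so your argument there is fine. But on the transition annulus $R^L=f_2'\partial_t-f_1'\partial_\varphi$ has a $\partial_\varphi$-component, and the correction does not vanish.

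The paper handles this by bounding $|(g^*\alpha-\alpha)(R^L)|\leq 2C f_1'(r)$ (using $\|\alpha\|_\infty\leq C$, $\|\partial_\varphi\|\leq 1$ on $r\leq\delta$, and that $g^*\alpha-\alpha$ has no $dt$-part), and then performing a careful case-by-case estimate showing that
\[
L(r):=f_1(r)f_2'(r)-f_2(r)f_1'(r)-2Cf_1'(r)>0
\]
on $[\frac{\epsilon}{2},\delta-\frac{\epsilon}{2}]$. This is precisely why the profile is chosen with $f_2(\delta/2)=-4C$: the depth of the Lutz twist must dominate the worst-case distortion of $\alpha$ under the deck group. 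Your statement that ``the piecewise-linear data listed in the construction has enough slack'' addresses only the classical contact condition $f_1f_2'-f_2f_1'>0$, not the stronger inequality $L(r)>0$ required here; without this, the virtual contact condition is unverified on the infinitely many translated tubes. Your treatment of the overtwisted disk, by contrast, is more explicit than the paper's and is essentially correct.
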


\begin{proof}
We already know that $\alpha^L$ is bounded with respect to $G$-invariant metric.
Before checking the non-vanishing condition,
we first need to specify the non-vanishing vector field $R^L$ which generates $\ker d\alpha^L$.
For convenience, let $h:[\delta-\frac{\epsilon}{2},\delta]\to\R$ be a positive function satisfying
$h(\delta-\frac{\epsilon}{2})=f_2'(\delta-\frac{\epsilon}{2})$, $h(\delta)=1$.
Then we may choose our Reeb-like vector field $R^L$ as follows:
\bean
R^L=f_2'(\frac{\epsilon}{2})\p_t\quad&\text{ for }r\in[0,\frac{\epsilon}{2}];\\
R^L=f_2'(r)\p_t-f_1'(r)\p_\varphi\quad&\text{ for }r\in[\frac{\epsilon}{2},\delta-\frac{\epsilon}{2}];\\
R^L=h(r)\p_t\quad&\text{ for }r\in[\delta-\frac{\epsilon}{2},\delta];\\
R^L=\p_t\quad&\text{ for }x\in S^1\times \Big(\D\setminus\bigcup_{g\in G}g(B_\delta(0))\Big).
\eea
Recall that $R^L$ is $G$-invariant and hence $R^L$ on $S^1\times B_\delta(0)$
determines $R^L$ on $S^1\times g(B_\delta(0))$ for any $g\in G$.

Let us begin with the case $x\in S^1\times \big(\D\setminus\bigcup_{g\in G}g(B_\delta(0))\big)$.
In this region there is no change under the Lutz twist and $g\in G$ acts trivially on $\p_t$, thus we have
\bean
\alpha^L(x)[R^L(x)]=1.
\eea
Next we consider the case $r\in[0,\frac{\epsilon}{2})$ where 
$\alpha^L=-dt-\frac{r^2}{1-r^2}d\varphi$ and $R^L=f_2'(\frac{\epsilon}{2})\p_t$.
For $x\in S^1\times\big(\bigcup_{g\in G}g(B_{\frac{\epsilon}{2}}(0))\big)$ we obtain
\bean
\alpha^L(x)[R^L(x)]=-f_2'(\frac{\epsilon}{2})>0.
\eea
Now we move to the case $r\in[\delta-\frac{\epsilon}{2},\delta]$ 
where $\alpha^L=dt+\frac{r^2}{1-r^2}d\varphi$ and $R^L=h(r)\p_t$.
Let $A=\{(r,\varphi)\in\D\,|\,\delta-\frac{\epsilon}{2}\leq r\leq\delta\}$ and 
$x\in S^1\times\big(\bigcup_{g\in G}g(A)\big)$ then
\bean
\alpha^L(x)[R^L(x)]\geq\min_{r\in[\delta-\frac{\epsilon}{2},\delta]}h(r)>0.
\eea

It remains to verify the case $r\in[\frac{\epsilon}{2},\delta-\frac{\epsilon}{2}]$ and
we need the following preparation.
Let $g\in G$ be a M\"obius transformation which acts on $\D$ and denote $g(0)=z_0$.
Then it is necessary to know the lower bound of $\alpha^L(R^L)$
on $\pi^{-1}\circ g(B_\delta(0))$.
The following simple observation
\bean
\alpha^L(R^L)|_{\pi^{-1}(z_0)}&=\alpha^L(g_*R^L)|_{g(\pi^{-1}(0))}
=g^*\alpha^L(R^L)|_{\pi^{-1}(0)}
\eea
informs us that it suffices to investigate $g^*\alpha^L(R^L)$
on $\pi^{-1}(B_\delta(0))$.
Now we compare $g^*\alpha^L$ with $\alpha^L$ as follows:
\bea\label{eqn:gpulla}
g^*\alpha^L(R^L)&=g^*(\alpha+\eta)(R^L)\\
&=(g^*\alpha+\eta)(R^L)\\
&=(g^*\alpha-\alpha+\alpha^L)(R^L).
\eea
Thus it is important to estimate $(g^*\alpha-\alpha)(R^L)$.
Since $G$ acts trivially on the $t$-coordinate,
$g^*\alpha-\alpha$ has no $dt$-part and by the construction of $C$ we know $\|\alpha\|\leq C$. 
For $r\in[\frac{\epsilon}{2},\delta-\frac{\epsilon}{2}]$ we then have
\bea\label{eqn:est1}
|(g^*\alpha-\alpha)(R^L)|
&\leq \|g^*\alpha-\alpha\|\cdot\|f_2'(r)\p_t-f_1'(r)\p_\varphi\|\\
&\leq 2Cf_1'(r)\|\p_\varphi\|\\
&\leq 2Cf_1'(r).
\eea
Here the last inequality comes from the following estimate:
\bean
\|\p_\varphi\|&=\|-r\sin\varphi\p_x+r\cos\varphi\p_y\|\\
&\leq\|r\p_x\|+\|r\p_y\|\\
&\leq\frac{2r}{1-r^2}\Big|_{r=\delta-\frac{\epsilon}{2}}\\
&\leq1.
\eea
The point of the estimate (\ref{eqn:est1}) is that the last term does not depend on $f_2'(r)$ and $g\in G$.
By combining (\ref{eqn:alphaL}), (\ref{eqn:gpulla}) and (\ref{eqn:est1}),
we estimate 
\bean
g^*\alpha^L(R^L)
&\geq\alpha^L(R^L)-|(g^*\alpha-\alpha)(R^L)|\\
&\geq \underbrace{f_1(r)f_2'(r)-f_2(r)f_1'(r)-2Cf_1'(r)}_{=:L(r)},\\
\eea
for $r\in[\frac{\epsilon}{2},\delta-\frac{\epsilon}{2}]$.\\[-2mm]

\noindent {\bf Case i} : $r\in[\frac{\epsilon}{2},\epsilon]$.

Recall that $C\geq1$, $\delta\in(0,\frac{1}{3}]$, $\epsilon\in(0,\frac{\delta}{10}]$.
By the choice of $\delta,\epsilon$
we estimate
\bean
f_1(r)&=\frac{4}{2\epsilon\delta-3\epsilon^2}(r-\frac{\epsilon}{2})^2-1\leq-\frac{7}{8};\\
f_1'(r)&=\frac{8}{2\epsilon\delta-3\epsilon^2}(r-\frac{\epsilon}{2})\leq\frac{5}{2\epsilon\delta}(r-\frac{\epsilon}{2});\\
f_2'(r)&\leq\frac{-16C+2\epsilon\delta}{\epsilon\delta}(r-\frac{\epsilon}{2})-\epsilon\\
\eea
and
\bean
L(r)&>f_1(r)f_2'(r)-2Cf_1'(r)\\
&\geq-\frac{7}{8}\left(\frac{-16C+2\epsilon\delta}{\epsilon\delta}(r-\frac{\epsilon}{2})-\epsilon\right)-2C\frac{5}{2\epsilon\delta}(r-\frac{\epsilon}{2})\\
&=\frac{36C-7\epsilon\delta}{4\epsilon\delta}(r-\frac{\epsilon}{2})+\frac{7}{8}\epsilon\\
&\geq\frac{7}{8}\epsilon.
\eea\\[-2mm]

\noindent {\bf Case ii} : $r\in[\epsilon,\frac{\delta}{2}-\epsilon]$.

We have the following estimate
\bean
f_1(r)&=\frac{4}{2\delta-3\epsilon}(r-\frac{\delta}{2});\\
f_1'(r)&=\frac{4}{2\delta-3\epsilon};\\
f_2(r)&\leq-\frac{8C}{\delta}(r-\epsilon);\\
f_2'(r)&\leq-\frac{8C}{\delta}
\eea
and
\bean
L(r)&\geq\frac{4}{2\delta-3\epsilon}(r-\frac{\delta}{2})(-\frac{8C}{\delta})+\frac{8C}{\delta}(r-\epsilon)\frac{4}{2\delta-3\epsilon}
-2C\frac{4}{2\delta-3\epsilon}\\
&=\frac{8C}{2\delta-3\epsilon}(1-\frac{4\epsilon}{\delta})\\
&\geq\frac{8C}{2\delta-3\epsilon}\cdot\frac{3}{5}\\
&\geq\frac{12C}{5\delta}.
\eea\\[-2mm]

\noindent{\bf Case iii} : $r\in[\frac{\delta}{2}-\epsilon,\frac{\delta}{2}+\epsilon]$.

We obtain
\bean
f_1(r)&=\frac{4}{2\delta-3\epsilon}(r-\frac{\delta}{2});\\
f_1'(r)&=\frac{4}{2\delta-3\epsilon};\\
f_2(r)&\leq-3C;\\
|f_2'(r)|&\leq|\frac{8C}{\epsilon\delta}(r-\frac{\delta}{2})|
\eea
and
\bean
L(r)&\geq\frac{4}{2\delta-3\epsilon}(r-\frac{\delta}{2})\cdot\frac{8C}{\epsilon\delta}(r-\frac{\delta}{2})+3C\frac{4}{2\delta-3\epsilon}-2C\frac{4}{2\delta-3\epsilon}\\
&=\frac{4C}{2\delta-3\epsilon}\left(\frac{8}{\epsilon\delta}(r-\frac{\delta}{2})^2+1\right)\\
&\geq\frac{4C}{2\delta-3\epsilon}\\
&\geq\frac{2C}{\delta}.
\eea
The cases $r\in[\frac{\delta}{2}+\epsilon,\delta-\epsilon]$, $r\in[\delta-\epsilon,\delta-\frac{\epsilon}{2}]$ are
similar to {\bf Case ii}, {\bf Case i} respectively.

Therefore we finally have $\inf_{x\in S^1\times\D}\alpha^L(R^L)>0$ which conclude that the triple
$(p:S^1\times\D\to M,\om^L,\alpha^L)$ is a virtual contact structure.
Moreover, $\pi^{-1}(B_\delta(0))$ contains an overtwisted disk.
This proves the lemma.
\end{proof}
\end{Ex}
In the above construction the 1-form $\alpha^L$ and the vector field $R^L$ are not smooth. 
In fact $f_1'$, $f_2'$ are not smooth on finite points and $R^L$ is also not smooth on the corresponding region.
Now we consider a sequence of $C^\infty$ functions $(h_{1,n}, h_{2,n})_{n\in\N}$ which converges to $(f_1, f_2)$ in $C^1$.
Then by the same construction as above we have a sequence of smooth 1-forms $(\lambda_n^L)_{n\in\N}$ 
and a sequence of smooth vector fields $(X_n^L)_{n\in\N}$ 
satisfying the property that $\lambda_n^L(X_n^L)$ converges uniformly to $\alpha^L(R^L)$.
For sufficiently large $n\in\N$ we choose a smooth 1-form $\lambda^L$ such that $\inf_{x\in S^1\times\D}\lambda^L(X^L)>0$.
Again by the construction, each derivative of $\lambda^L_n$ is determined by $h_{1,n}, h_{2,n}$.
We may require that all derivatives of $h_{1,n}, h_{2,n}$ 
are uniformly bounded with respect to the Poincar\'e metric, without loss of generality.
These conclude that $(p:S^1\times\D\to M,\om^L,\lambda^L)$ is a smooth virtual contact structure.

\end{document}